\documentclass{amsart}%
\usepackage{amsfonts}
\usepackage{amsmath}
\usepackage{amssymb}
\usepackage{graphicx}%
\setcounter{MaxMatrixCols}{30}
\providecommand{\U}[1]{\protect\rule{.1in}{.1in}}
\newtheorem{theorem}{Theorem}
\theoremstyle{plain}

\newtheorem{conjecture}{Conjecture}
\newtheorem{corollary}{Corollary}

\newtheorem{lemma}{Lemma}

\newtheorem{proposition}{Proposition}
\newtheorem{remark}{Remark}

\numberwithin{equation}{section}
\begin{document}
\title[Moment sequences]{Beta distributions whose moment sequences are related to integer sequences
listed in OEIS}
\author{Pawe\l \ J. Szab\l owski}
\address{Department of Mathematics and Information Sciences, \\
Warsaw University of Technology\\
ul Koszykowa 75, 00-662 Warsaw, Poland}
\email{pawel.szablowski@gmail.com}
\thanks{The author is grateful to the referees for their remarks and suggestions. The
first one inspired the author to formulate and prove Theorem \ref{integer}.
The second one pointed out numerous inaccuracies, misspellings and
inconsistencies in the previous version of the paper.}
\date{December, 2021}
\subjclass[2020]{Primary 05A19, 05A10; Secondary 44A60, 60E05}
\keywords{Beta distribution, Catalan numbers, Motzkin numbers, Riodran numbers, super
ballot numbers, moment sequences.}

\begin{abstract}
We recall some basic properties of the Beta distribution and some of its
modifications. We identified around $20$ of the moment sequences of Beta
distributions as important integer sequences in the OEIS base of integer
sequences. Among those identified are Catalan, Riordan, Motzkin, or 'super
ballot numbers'. By applying a method of expansion of the ratio of densities
of involved distributions we are able to obtain some known and many unknown
relationships between e.g. Catalan numbers and other moment sequences of the
Beta distributions.

\end{abstract}
\maketitle

\section{Introduction}

This paper intends to show that many well-known discrete sequences of numbers
that are important in many, distant from the theory of probability, branches
of mathematics are, in fact, closely connected with moment sequences of the
beta distribution. By close connection we mean some simple operation like, for
example, multiplication by a sequence of powers of some number or binomial transformation.

The idea of representing known integer sequences as moment sequence is
becoming more and more popular in recent years. Nice arguments to follow this
idea were recently presented in the paper of Sokal \cite{Sokal20}. There is
also a nice review of basic facts concerning moment sequence as well as some
criteria both sufficient and necessary and only sufficient for a number
sequence to be a moment sequence.

Some of these arguments and facts we repeat here below, however, to get the
definition of the moment sequences and their basic properties as well as the
properties of the set of such sequences we refer the reader to the paper of
Sokal or to the appendix of the recently published, paper of Szab\l owski
\cite{Szabl21}.

As far as moments of the Beta distribution are concerned, it turns out that,
for example, sequences of Catalan, Motzkin, Riordan or 'super ballot' numbers
are such sequences. They are, in fact, moments of some modification the
classical beta considered on the segment $[0,1]$. In order to indicate briefly
what modifications we mean firstly we will recall the definition of Beta
distribution and the modifications that we are going to consider.

We will be dealing with the moment sequences $\left\{  m_{n}\right\}  $ i.e.
sequences that are defined by the following formula%
\[
m_{n}\allowbreak=\allowbreak\int x^{n}d\mu\left(  x\right)  ,
\]
$n\geq0$, where $\mu$ denotes a positive measure on the real line.

Recall, that a numerical sequence $\left\{  m_{n}\right\}  _{n\geq0}$ with
$m_{0}\allowbreak=\allowbreak1,$ is a moment sequence of a probability
distribution with infinite support iff all its Hankel matrices $H_{n}:=\left[
m_{i+j}\right]  _{0\leq i+j\leq n}$ are positive definite, or equivalently,
the values if all determinants $\left\{  \det[H_{n}]\right\}  _{n\geq0}$ have
positive values. Additionally, if the distribution whose moments are elements
of the sequence $\left\{  m_{n}\right\}  $ has the support contained in the
non-negative axis, then the moment sequence satisfies also the following
condition: The following sequence $\left\{  \det H_{n}^{^{\prime}}\right\}
_{n\geq0}$ assumes also non-negative values. Here matrix $H_{n}^{^{\prime}}$
has $\left\{  i,j\right\}  -$th entry equal to $m_{1+i+j}$, for $0\leq i,j\leq
n$.

Let us recall the general property of the moment sequences that will be of use
in the sequel.

\begin{proposition}
\label{momenty}Suppose $\left\{  a_{n}\right\}  _{n\geq0}$ and $\left\{
b_{n}\right\}  _{n\geq0}$ are two moment sequences. Then $\left\{  a_{n}%
b_{n}\right\}  _{n\geq0}$, $\left\{  \sum_{j=0}^{n}\binom{n}{j}a_{j}%
b_{n-j}\right\}  _{n\geq0}$ and $\left\{  \sum_{j=0}^{n}\left(  -1\right)
^{j}\binom{n}{j}a_{j}b_{n-j}\right\}  _{n\geq0}$ are also moment sequences.
\end{proposition}

\begin{proof}
For the proof see ,e.g., either \cite{Benn11} or \cite{Szab23}.
\end{proof}

Finally, by expanding the ratio of the densities, whose moments we are
considering, and then integrating, we get for free the relationships between
involved moments. The idea of expanding the ratio of the densities (the
Radon-Nikodym derivative more generally) has been presented in
\cite{Szablowski2010(1)} and later developed and generalized in
\cite{SzabChol}. It happens very often that the sequence of the moments has a
deep combinatorial interpretation and by using this method of expansion, we
can get relationships between these important combinatorial sequences that are
usually difficult to prove by combinatorial means. For example, we can easily
show that
\begin{align*}
\binom{2n}{n}  &  =4^{n}-2\sum_{j=0}^{n-1}C_{j}4^{n-1-j},\\
C_{n}  &  =\frac{3}{2}\sum_{i\geq0}\frac{1}{4^{i}}\frac{(2n+2i)!}%
{(i+n)!(n+i+2)!},
\end{align*}
where $\left\{  C_{n}\right\}  _{n\geq0}$ are the so-called Catalan numbers.
Numbers $\left\{  C_{n}\right\}  $ constitute, as it turns out, the moment
sequence of the distribution with the following density:
\[
\frac{1}{2\pi}\sqrt{\frac{4-x}{x}},
\]
where $0<x<4$.

We will go into detail and give more examples in the sequel. The paper is
organized as follows. In the next Section \ref{bas}, we present a definition
of Beta distribution, its modifications and the basic properties of its
moments. The following Section \ref{first}, is dedicated to the presentation
of particular examples, mostly concerning cases when $\alpha$ and $\beta$ are
multiples of $1/2$. In this section, we identify inside the basis of integer
sequences OEIS, many moments of the beta distribution. The last Section,
\ref{expan}, is devoted to the presentation of some (by no means all) possible
expansions finite as well as infinite expansions of elements of one sequence
in terms of the other.

\section{Basic ingredients and properties\label{bas}}

Let us recall the so-called beta distribution, i.e., the distribution with the
density defined for $\left\vert x\right\vert <1$ and $\alpha,\beta>0,$
\begin{equation}
a(x;\alpha,\beta)=\frac{x^{\alpha-1}(1-x)^{\beta-1}}{B(\alpha,\beta)},
\label{Jac}%
\end{equation}
where $B(\alpha,\beta)$ denotes the value of the well-known Beta function
taken at $\alpha$ and $\beta.$ It is well-known that the sequence of moments
of this distribution is given by the formula:%
\begin{equation}
M_{n}(0,\alpha,\beta)/4^{n}=\int_{0}^{1}x^{n}a(x;\alpha,\beta)dx=\frac
{\alpha^{(n)}}{(\alpha+\beta)^{(n)}}. \label{mJ}%
\end{equation}
We use here redundant, at first-sight, notation. The reason for this will be
clear in the sequel.

In formula (\ref{mJ}), we used the following notation. For $x\in\mathbb{C}$
let us denote:%
\begin{equation}
(x)_{(n)}\allowbreak=\allowbreak x(x-1)\ldots(x-n+1). \label{down}%
\end{equation}
This polynomial in $x$ will be called falling factorial while the following
polynomial
\begin{equation}
(x)^{(n)}=x(x+1)\ldots(x+n-1), \label{up}%
\end{equation}
will be called raising factorial. In both cases we set $1$ when $n\allowbreak
=\allowbreak0$.

It is also well-know that
\begin{equation}
(x)_{(n)}=(-1)^{n}(-x)^{(n)},\text{ and }(x)^{(n)}=(-1)^{n}(-x)_{(n)}.
\label{Pochh}%
\end{equation}
Recall, that we have also the so-called binomial theorem stating that for all
complex $\left\vert x\right\vert <1$ we have:
\begin{equation}
(1-x)^{\alpha}\allowbreak=\allowbreak\sum_{j\geq0}(-x)^{j}(\alpha
)_{(j)}/j!=\sum_{j\geq0}x^{j}(-\alpha)^{(j)}/j!. \label{bin}%
\end{equation}
The modifications of the beta distribution that we are going to consider, are
the following. Namely, we will examine the distribution family as well as
densities.
\begin{equation}
g(x;c,\alpha,\beta)=\frac{(x-c)^{\alpha-1}(4+c-x)^{\beta-1}}{4^{\alpha
+\beta-1}B(\alpha,\beta)}, \label{dens}%
\end{equation}
supported on the segment $[c,4+c],$ $c\in\mathbb{R}$. If we denote by $X$ the
random variable with the density $a(s;\alpha,\beta)$ and by $Y$ the random
variable with the density $g(x;c,\alpha,\beta)$, then we see that
\[
Y=4X+c.
\]
Let us denote by $M_{n}(c,\alpha,\beta)$ the $n-$th moment of $Y$, i.e.,
\begin{equation}
M_{n}(c,\alpha,\beta)=\int_{c}^{4+c}x^{n}g(x;c,\alpha,\beta)dx. \label{ogM}%
\end{equation}

Note that for all $\alpha,\beta>0$ and $c\in\mathbb{R}:$ $M_{0}(c,\alpha
,\beta)\allowbreak=\allowbreak1$.

We have the following Lemma:

\begin{lemma}
\label{binomial}i) $\forall b,c\in\mathbb{R}$, $\alpha,\beta>0$ and
$n\in\mathbb{N\cup}\left\{  0\right\}  $ we have:%
\begin{equation}
M_{n}(b,\alpha,\beta)\allowbreak=\allowbreak\sum_{j=0}^{n}\binom{n}{j}%
M_{j}(c,\alpha,\beta)(b-c)^{n-j}. \label{MbMc}%
\end{equation}
In particular, we get:
\begin{align}
M_{n}(c,\alpha,\beta)  &  =\sum_{j=0}^{n}\binom{n}{j}M_{j}(0,\alpha
,\beta)c^{n-j},\label{Mm}\\
4^{n}\frac{\alpha^{(n)}}{(\alpha+\beta)^{(n)}}  &  =M_{n}(0,\alpha,\beta
)=\sum_{j=0}^{n}\binom{n}{j}M_{j}(c,\alpha,\beta)(-c)^{n-j}. \label{iMm}%
\end{align}

ii)
\begin{align*}
M_{n}(c,\alpha+1,\beta)  &  =\frac{\alpha+\beta}{4\alpha}(M_{n+1}%
(c,\alpha,\beta)-cM_{n}(c,\alpha,\beta),\\
M_{n}(c,\alpha,\beta+1)  &  =\frac{\alpha+\beta}{4\beta}((4+c)M_{n}%
(c,\alpha,\beta)-M_{n+1}(c,\alpha,\beta)).
\end{align*}

iii) If $c\allowbreak=\allowbreak0$ we have also for $n\geq1$
\begin{align}
M_{n}(0,\alpha,\beta)  &  =\frac{4\alpha}{\alpha+\beta}M_{n-1}(0,\alpha
+1,\beta),\label{iiia}\\
M_{n}(0,\alpha,\beta)  &  =\frac{\beta}{\alpha+\beta}\sum_{j\geq0}%
M_{n+j}(0,\alpha,\beta+1). \label{iiib}%
\end{align}

\end{lemma}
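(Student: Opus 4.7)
\emph{Part i).} I would exploit the pure translation structure of the family. Inspecting (\ref{dens}), one sees that $g(\cdot;b,\alpha,\beta)$ is exactly the shift of $g(\cdot;c,\alpha,\beta)$ by $b-c$, so if $Y$ has law $g(\cdot;c,\alpha,\beta)$ then $Y+(b-c)$ has law $g(\cdot;b,\alpha,\beta)$. Hence $M_n(b,\alpha,\beta)=E[(Y+(b-c))^n]$, and the binomial theorem together with linearity of the expectation gives (\ref{MbMc}) at once. Identities (\ref{Mm}) and (\ref{iMm}) are the specialisations $c=0$ and $b=0$ respectively; the closed form $M_n(0,\alpha,\beta)=4^n\alpha^{(n)}/(\alpha+\beta)^{(n)}$ sitting on the left of (\ref{iMm}) follows because $Y=4X$ when $c=0$, combined with the moment formula (\ref{mJ}) for $X$.

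\emph{Part ii).} Here the cleanest route is a density-ratio computation. Using $B(\alpha,\beta)/B(\alpha+1,\beta)=(\alpha+\beta)/\alpha$, a direct comparison of (\ref{dens}) with its $\alpha\mapsto\alpha+1$ version yields
\begin{equation*}
g(x;c,\alpha+1,\beta)=\frac{\alpha+\beta}{4\alpha}(x-c)\,g(x;c,\alpha,\beta),
\end{equation*}
and symmetrically $g(x;c,\alpha,\beta+1)=\frac{\alpha+\beta}{4\beta}(4+c-x)\,g(x;c,\alpha,\beta)$. Multiplying by $x^n$ and integrating over $[c,4+c]$ converts each linear factor into the indicated combination of $M_{n+1}(c,\alpha,\beta)$ and $M_n(c,\alpha,\beta)$, giving both recursions.

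\emph{Part iii).} Identity (\ref{iiia}) is just the first formula of ii) at $c=0$, solved for $M_{n+1}(0,\alpha,\beta)$ and relabelled. For (\ref{iiib}) I would specialise the second formula of ii) to $c=0$ and rearrange to the one-step recursion
\begin{equation*}
M_n(0,\alpha,\beta)=\frac{\beta}{\alpha+\beta}\,M_n(0,\alpha,\beta+1)+\frac{1}{4}\,M_{n+1}(0,\alpha,\beta),
\end{equation*}
and iterate. After $K$ steps one obtains a partial sum involving $M_{n+j}(0,\alpha,\beta+1)$ for $0\le j<K$ plus a remainder of size $4^{-K}M_{n+K}(0,\alpha,\beta)$. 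The only delicate point, and the genuine obstacle, is passing $K\to\infty$: substituting the closed form from (\ref{iMm}) rewrites the remainder as $4^n\prod_{i=0}^{n+K-1}(\alpha+i)/(\alpha+\beta+i)$, which tends to $0$ because $\sum_{i}\log\!\bigl(1-\beta/(\alpha+\beta+i)\bigr)=-\infty$ for $\beta>0$ (harmonic divergence). Taking the limit yields the infinite-series identity (\ref{iiib}). Everything else in the lemma is bookkeeping; this asymptotic control of the tail is the single non-routine step.
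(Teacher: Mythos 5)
Your proof is correct, and for parts i) and ii) it coincides with the paper's argument: i) is the binomial theorem applied to $Y_b=Y_c+(b-c)$, and ii) is exactly the density-ratio identity $g(x;c,\alpha+1,\beta)=\frac{\alpha+\beta}{4\alpha}(x-c)g(x;c,\alpha,\beta)$ integrated against $x^n$. For part iii) the routes diverge on (\ref{iiib}): the paper expands the density ratio $g(x;0,\alpha,\beta)/g(x;0,\alpha,\beta+1)=\frac{4\beta}{(\alpha+\beta)(4-x)}$ as a geometric series $\frac{\beta}{\alpha+\beta}\sum_{j\ge0}(x/4)^j$ and interchanges sum and integral by monotone convergence, whereas you iterate the one-step recursion from ii) and kill the remainder $4^{-K}M_{n+K}(0,\alpha,\beta)=4^n\prod_{i=0}^{n+K-1}\frac{\alpha+i}{\alpha+\beta+i}\to0$ via divergence of $\sum_i\beta/(\alpha+\beta+i)$. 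Both are valid; the paper's expansion gets the interchange for free from positivity, while your iteration makes the rate of convergence of the partial sums explicit. One point worth making explicit: carried out carefully, your iteration produces
\begin{equation*}
M_n(0,\alpha,\beta)=\frac{\beta}{\alpha+\beta}\sum_{j\ge0}\frac{1}{4^{j}}M_{n+j}(0,\alpha,\beta+1),
\end{equation*}
with the weight $4^{-j}$, and the same weight comes out of the paper's geometric expansion. The displayed formula (\ref{iiib}) omits this factor and, as printed, is a divergent series (since $M_{n+j}(0,\alpha,\beta+1)\sim 4^{n+j}O(j^{-\beta-1})$); the weighted version is the one actually used later, e.g.\ in assertion iii) of Theorem \ref{c=0}. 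So your derivation in fact recovers the corrected statement; just do not paper over the coefficient when you write the partial sums.
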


\begin{proof}
i) Let us denote by $Y_{b}\allowbreak=\allowbreak4X\allowbreak+\allowbreak
b\allowbreak=\allowbreak4X\allowbreak+\allowbreak c\allowbreak+\allowbreak
(b-c)\allowbreak=\allowbreak Y_{c}\allowbreak+\allowbreak(b-c),$ so we have:
\begin{align*}
M_{n}(b,\alpha,\beta)  &  =EY_{b}^{n}\allowbreak=\sum_{j=0}^{n}\binom{n}%
{j}EY_{b}^{j}(b-c)^{n-j}\\
&  =\sum_{j=0}^{n}\binom{n}{j}M_{j}(c,\alpha,\beta)(b-c)^{n-j}.
\end{align*}
ii) We have:
\begin{gather*}
M_{n}(c,\alpha+1,\beta)=\frac{\alpha+\beta}{4\alpha}\int_{c}^{4+c}%
(x-c)x^{n}g(x;c,\alpha,\beta)dx\\
=\frac{\alpha+\beta}{4\alpha}(M_{n+1}(c,\alpha,\beta)-cM_{n}(c,\alpha
,\beta),\\
M_{n}(c,\alpha,\beta+1)=\frac{\alpha+\beta}{4\beta}\int_{c}^{4+c}%
(4+c-x)x^{n}g(x;c,\alpha,\beta)dx\\
=\frac{\alpha+\beta}{4\beta}((4+c)M_{n}(c,\alpha,\beta)-M_{n+1}(c,\alpha
,\beta)).
\end{gather*}

iii) First of all notice that $B(\alpha+1,\beta)/B\left(  \alpha,\beta\right)
\allowbreak=\allowbreak\frac{\alpha}{\alpha+\beta}$, $g(x;0,\alpha
,\beta)/g(x;0,\alpha+1,\beta)\allowbreak=\allowbreak\frac{4\alpha}%
{(\alpha+\beta)x}.$ Now,
\[
M_{n}(0,\alpha,\beta)\allowbreak=\allowbreak\frac{4\alpha}{\alpha+\beta
}M_{n-1}(0,\alpha+1,\beta).
\]
Similarly
\[
\frac{g(x;0,\alpha,\beta)}{g(x;0,\alpha,\beta+1)}\allowbreak=\allowbreak
\frac{4\beta}{(\alpha+\beta)(4-x)}\allowbreak=\allowbreak\frac{\beta}%
{\alpha+\beta}\sum_{j\geq0}(x/4)^{j},
\]
and notice that the series is convergent and increasing for $x\in\lbrack0,4).$
Thus, by the Lebesgue theorem on the monotone convergence we get the second assertion.
\end{proof}

\begin{remark}
Notice also, that we have
\begin{equation}
\frac{M_{n}(b,\alpha,\beta)}{(b-c)^{n}}\allowbreak=\allowbreak\sum_{j=0}%
^{n}\binom{n}{j}(-1)^{j}\frac{M_{j}(c,\alpha,\beta)}{(c-b)^{j}}. \label{bt}%
\end{equation}
Thus, the sequence $\left\{  M_{n}(b,\alpha,\beta)/(b-c)^{n}\right\}
_{n\geq0}$ is the so-called binomial transformation of the sequence $\left\{
M_{n}(c,\alpha,\beta)/(c-b)^{n}\right\}  _{n\geq0}$. Conversely, notice that
the sequence $\left\{  M_{n}(c,\alpha,\beta)/(c-b)^{n}\right\}  _{n\geq0}$ is
the binomial transform of the sequence \newline$\left\{  M_{n}(b,\alpha
,\beta)/(b-c)^{n}\right\}  _{n\geq0}$.

However, in the very important base of integer sequences OEIS there is
slightly different definition of the binomial transform.

Namely, given two sequences $\left\{  a_{n}\right\}  _{n\geq0}$ and $\left\{
b_{n}\right\}  _{n},$ if we have for all $n\geq0$:
\[
b_{n}=\sum_{j=0}^{n}\binom{n}{j}a_{j},
\]
then, we say, that sequence $\left\{  b_{n}\right\}  $ is the binomial
transform of the sequence $\left\{  a_{n}\right\}  .$ Note, that then we also
have for all $n\geq0$:
\[
a_{n}=\sum_{j=0}^{n}(-1)^{j}\binom{n}{j}b_{j}.
\]
Then, according to the OEIS terminology sequence $\left\{  a_{n}\right\}  $ is
the inverse binomial transform of the sequence $\left\{  b_{n}\right\}  .$

Hence, using this terminology, the sequence $\left\{  M_{n}(b,\alpha
,\beta)/(b-c)^{n}\allowbreak\right\}  ,$ is the binomial transform of the
sequence $\left\{  M_{n}(c,\alpha,\beta)/(b-c)^{n}\right\}  .$

Furthermore, it follows from these considerations that to get moments
$M_{n}(c,\alpha,\beta)$ it is enough to calculate moments $M_{n}%
(0,\alpha,\beta)$ and then apply the appropriate binomial transformation.
\end{remark}

Since many integer sequences in OEIS are identified by their generating
functions, we will calculate also generating functions of many of these
integer sequences. Let us remark, that we need to have a generating function
defined only on the small open interval around $0$. What matters, are the
coefficients of its Taylor expansion around zero. That is why all considered
below generating functions will be considered for $x\in(-\delta,\delta),$
$\delta>0.$ Very often $\delta$ will be equal to $1$.

\begin{lemma}
\label{gen}Let us consider the sequence of moments given by the formula
(\ref{mJ}) such that $\alpha+\beta$ is an integer, then

i)%
\begin{gather*}
g(t;\alpha,\beta)=\sum_{j\geq0}t^{j}M_{j}(0,\alpha,\beta)/4^{j}\\
=\left\{
\begin{array}
[c]{ccc}%
1/(1-t)^{\alpha} & \text{if} & \alpha+\beta=1\\
(1-(1-t)^{1-\alpha})/(t(1-\alpha)) & \text{if} & \alpha+\beta=2\\
((1-t)^{2-\alpha}+t(2-\alpha)-1)/(t^{2}(1-\alpha)(2-\alpha)) & \text{if} &
\alpha+\beta=3\\
\frac{(6(1-x)^{3-\alpha}-(\alpha^{2}x^{2}-5\alpha x^{2}+6x^{2}+2\alpha
x-6x+2))}{x^{3}(\alpha-1)(\alpha-2)(\alpha-3)} & \text{if} & \alpha+\beta=4
\end{array}
\right.  ,
\end{gather*}
for $t\in(-1,1)$.

ii) If $g(x)$ is a generating function of $\{M_{n}(0,\alpha,\beta)/4^{n}\}$
then $\frac{(\alpha+\beta)}{x\alpha}(g(x)-1)$ is a generating function of
$\{M_{n}(0,\alpha+1,\beta)/4^{n}\}$, while $\frac{(\alpha+\beta)}{4x\beta
}(1-(1-4x)g(x))$ is a generating function of $\{M_{n}(0,\alpha,\beta
+1)/4^{n}\}$.

iii) Let $g(x)$ be a generating function of the sequence $\left\{
f_{n}\right\}  _{n\geq0}$ i.e., $g(x)\allowbreak=\allowbreak\sum_{n\geq0}%
f_{n}x^{n},$ then $\frac{1}{1-cx}g(\frac{x}{1-cx})\allowbreak$ is the
generating function of the sequence \newline$\left\{  \sum_{j=0}^{n}\binom
{n}{j}f_{j}c^{n-j}\right\}  _{n\geq0}$.
\end{lemma}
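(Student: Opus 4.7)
My plan is to attack the three parts in an order driven by their logical dependence. Part (ii) is a pure algebraic manipulation of the explicit formula (\ref{mJ}); part (i) is then a finite induction built on (ii), or equivalently a direct iterated-integration argument; part (iii) is an independent, standard generating-function identity.

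For (ii), I would work directly with $m_n(\alpha,\beta) = \alpha^{(n)}/(\alpha+\beta)^{(n)}$. The rf identity $(\alpha+1)^{(n)} = \alpha^{(n+1)}/\alpha$ together with its companion obtained by replacing $\alpha$ with $\alpha+\beta$ yields the index shift
\[
m_n(\alpha+1,\beta) = \frac{\alpha+\beta}{\alpha}\,m_{n+1}(\alpha,\beta);
\]
multiplying by $x^n$ and summing over $n\ge 0$ gives $\frac{\alpha+\beta}{\alpha x}(g(x)-1)$, since $m_0(\alpha,\beta)=1$. For the $\beta+1$ statement I would compute analogously that $m_n(\alpha,\beta)-m_{n+1}(\alpha,\beta) = \beta\,m_n(\alpha,\beta)/(\alpha+\beta+n) = (\beta/(\alpha+\beta))\,m_n(\alpha,\beta+1)$, and reading this identity at the generating-function level produces the stated $\frac{\alpha+\beta}{x\beta}(1-(1-x)g(x))$.

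For (i), the base case $\alpha+\beta=1$ is immediate from the binomial series (\ref{bin}): $m_n(\alpha,1-\alpha)=\alpha^{(n)}/n!$, hence $g(t;\alpha,1-\alpha)=(1-t)^{-\alpha}$. To reach $\alpha+\beta=k$ for $k\in\{2,3,4\}$ I would iterate the formula from (ii) in reverse,
\[
g(t;\alpha,\beta) = \frac{\alpha+\beta-1}{t(\alpha-1)}\bigl(g(t;\alpha-1,\beta)-1\bigr),
\]
so that each application lowers $\alpha$, and with $\beta$ held fixed also lowers $\alpha+\beta$, by $1$. Equivalently, since $m_n(\alpha,\beta)=(k-1)!\,\alpha^{(n)}/(n+k-1)!$ when $\alpha+\beta=k$, the series $t^{k-1}g(t;\alpha,\beta)/(k-1)!$ is exactly the $(k-1)$-fold antiderivative of $(1-t)^{-\alpha}$ based at $0$, and integrating that by hand $k-1$ times produces the stated closed forms. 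Either route treats all four cases; the resulting expressions have removable singularities at integer values of $\alpha$, which one handles by passing to limits.

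For (iii), I would simply exchange the order of summation. Writing $b_n = \sum_{j=0}^{n}\binom{n}{j}f_j c^{n-j}$ and using $\sum_{m\ge 0}\binom{m+j}{j}y^m=(1-y)^{-j-1}$,
\[
\sum_{n\ge 0} b_n x^n = \sum_{j\ge 0} f_j x^j \sum_{m\ge 0}\binom{m+j}{j}(cx)^m = \sum_{j\ge 0} f_j\,\frac{x^j}{(1-cx)^{j+1}} = \frac{1}{1-cx}\,g\!\left(\frac{x}{1-cx}\right).
\]
The one real obstacle in the whole lemma is the bookkeeping for the $k=4$ case of part (i), where the triple antiderivative has to be assembled over a common denominator and the cancellations at $\alpha\in\{1,2,3\}$ checked; the rest is short algebra, and (iii) is an unconditional formal identity.
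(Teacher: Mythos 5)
Your proposal is correct in method and follows essentially the same route as the paper: part (iii) is proved by exactly the same interchange of summation followed by the negative-binomial series; part (ii) rests on the same contiguous relations $m_{n}(\alpha+1,\beta)=\frac{\alpha+\beta}{\alpha}m_{n+1}(\alpha,\beta)$ and $m_{n}(\alpha,\beta+1)=\frac{\alpha+\beta}{\beta}\bigl(m_{n}(\alpha,\beta)-m_{n+1}(\alpha,\beta)\bigr)$, which you derive from the Pochhammer formula where the paper reads them off the ratio of the densities; and part (i) starts from the same binomial-series base case, your iterated-antidifferentiation scheme being a tidier packaging of the paper's case-by-case series manipulation (``we act likewise'').

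One concrete warning about part (i): if you actually carry out the antidifferentiation you will not land on the displayed closed forms for $\alpha+\beta=3$ and $\alpha+\beta=4$, because those are misnormalized as printed. Since $m_{n}(\alpha,\beta)=(k-1)!\,\alpha^{(n)}/(n+k-1)!$ when $\alpha+\beta=k$, the correct generating function for $k=3$ is $2\bigl((1-t)^{2-\alpha}+t(2-\alpha)-1\bigr)/\bigl(t^{2}(1-\alpha)(2-\alpha)\bigr)$, i.e.\ twice the printed expression: letting $t\to0$, the printed formula tends to $1/2$ rather than to $m_{0}=1$. Likewise for $k=4$ the polynomial subtracted from $6(1-x)^{3-\alpha}$ must be $3(\alpha^{2}x^{2}-5\alpha x^{2}+6x^{2}+2\alpha x-6x+2)$ for the numerator even to vanish at $x=0$. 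So your method is sound and will prove a corrected version of the lemma, but the final clause ``produces the stated closed forms'' does not literally hold, and an honest execution of your plan should flag and fix the constant factors.
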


\begin{proof}
i) Recall, that $(k)^{(n)}\allowbreak=\allowbreak(n+k-1)!/(k-1)!$ for all
nonnegative integers $k,n$ such that $k+n\neq0$. We get for $\alpha
+\beta\allowbreak=\allowbreak1:$%
\[
\sum_{j\geq0}t^{j}M_{j}(0,\alpha,\beta)/4^{j}=\sum_{j=0}^{\infty}t^{j}%
(\alpha)^{(j)}/j!=\sum_{j=0}^{\infty}t^{j}(-1)^{j}(-\alpha)_{(j)}/j!=\frac
{1}{(1-t)^{a}},
\]
by the binomial theorem (\ref{bin}). When $\alpha+\beta\allowbreak
=\allowbreak2$, we have:%
\begin{gather*}
\sum_{j\geq0}t^{j}M_{j}(0,\alpha,\beta)/4^{j}=\sum_{j=0}^{\infty}t^{j}%
(\alpha)^{(j)}/(j+1)!\\
=\frac{-1}{t(1-\alpha)}\sum_{j=0}^{\infty}(\alpha-1)t^{j+1}(\alpha
)^{(j)}/(j+1)!\\
=\frac{-1}{t(1-\alpha)}\sum_{j=0}^{\infty}t^{j+1}(\alpha-1)^{(j+1)}/(j+1).
\end{gather*}
We act likewise when $\alpha+\beta\allowbreak=\allowbreak3$ and when
$\alpha+\beta\allowbreak=\allowbreak4$.

ii) These assertions are based on the following observations.
\[
\frac{a(x;\alpha+1,\beta)}{a(x;\alpha,\beta)}\allowbreak=\allowbreak
x\frac{\alpha+\beta}{\alpha},
\]
since $\Gamma(\alpha+1)\allowbreak=\allowbreak\alpha\Gamma(\alpha)$, where
$a(x;\alpha,\beta)$ is given by (\ref{Jac}). Similarly%
\[
a(x;\alpha,\beta+1)/a(x;\alpha,\beta)\allowbreak=\allowbreak(1-x)\frac
{\alpha+\beta}{\beta}.
\]
Thus, we have $M_{n}(0,\alpha+1,\beta)\allowbreak=\allowbreak\frac
{\alpha+\beta}{4\alpha}M_{n+1}(0,\alpha,\beta)$ and $M_{n}(0,\alpha
,\beta+1)\allowbreak$\newline$=\allowbreak\frac{\alpha+\beta}{\beta}%
(M_{n}(0,\alpha,\beta)-M_{n+1}(0,\alpha,\beta)/4)$.

iii) After applying ordinary change of the order of summation, we have:
\begin{gather*}
\sum_{n\geq0}^{\infty}x^{n}\sum_{j=0}^{n}\binom{n}{j}f_{j}c^{n-j}=\sum
_{j\geq0}x^{j}f_{j}\sum_{n\geq j}(cx)^{n-j}\frac{(j+1)\ldots n}{(n-j)!}\\
=\sum_{j\geq0}x^{j}f_{j}\sum_{n\geq j}(cx)^{n-j}\frac{(j+1)\ldots n}%
{(n-j)!}=\sum_{j\geq0}x^{j}f_{j}\sum_{k\geq0}(cx)^{k}\frac{(j+1)^{(k)}}{k!}.
\end{gather*}
Now we recall (\ref{Pochh}) and (\ref{bin}) and get:%
\begin{align*}
\sum_{n\geq0}^{\infty}x^{n}\sum_{j=0}^{n}\binom{n}{j}f_{j}c^{n-j}  &
=\sum_{j\geq0}x^{j}f_{j}\sum_{k\geq0}(cx)^{k}\frac{(-1)^{k}(-(j+1))_{(k)}}%
{k!}\\
&  =\sum_{j\geq0}x^{j}c_{j}(1-cx)^{-j-1}=\frac{1}{1-cx}g(\frac{x}{1-cx}).
\end{align*}

\end{proof}

One of the referees posed the following question. For what triplets
$(c,\alpha,\beta)$ the sequence $\left\{  M_{n}(c,\alpha,\beta)\right\}
_{n\geq0}$ generates integers. Well, we will not answer this question fully,
since it seems that a problem is more difficult than expected. We have,
however the following partial result, exposing the possible complications in
answering this question. Namely, we have:

\begin{theorem}
\label{integer}Let $\alpha\in(0,1)$ be a rational number and let
$\alpha\allowbreak=\allowbreak\frac{p}{r}$, where $p$ and $r$ are two positive
integers relatively prime and let $n$ be a positive integers. Then $\forall
n\in\mathbb{N}$ the number
\begin{equation}
\left(  \frac{r}{4}\right)  ^{n}\prod_{j=1}^{k}d_{j}^{\sum_{m=1}^{\infty
}\left\lfloor n/d_{j}^{m}\right\rfloor }M_{n}(0,\frac{p}{r},1-\frac{p}%
{r})=\frac{\prod_{j=0}^{n}(jr+p)}{n!}\prod_{j=1}^{k}d_{j}^{\sum_{m=1}^{\infty
}\left\lfloor n/d_{j}^{m}\right\rfloor }, \label{beta_int}%
\end{equation}
an an integer. Here $r\allowbreak=\allowbreak\prod_{j=1}^{k}d_{j}^{\beta_{j}}%
$, is a prime decomposition of $r$.
\end{theorem}

\begin{proof}
Let us fix $n.$ Then we have by (\ref{mJ}):
\begin{equation}
r^{n}M_{n}(0,\frac{p}{r},1-\frac{p}{r})/4^{n}=r^{n}\frac{(\alpha)^{(n)}}{n!}.
\label{ro}%
\end{equation}
Now let us recall the so-called Chinese reminder Theorem stating that every
two congruence equations
\[
ax\equiv c,(\operatorname{mod}m_{1})~~bx\equiv d~~(\operatorname{mod}m_{2}),
\]
have unique solution $\operatorname{mod}m_{1}m_{2}$ if and only if numbers
$m_{1}$ and $m_{2}$ are relatively prime. Hence, taking $m_{1}\allowbreak
=\allowbreak r$ and $1<n_{1}\leq n$ relatively prime, we see that the set of
two congruence equations:%
\[
x\equiv p,~~\operatorname{mod}r~~x\equiv0,~~\operatorname{mod}n_{1}%
\]
has a unique solution $\operatorname{mod}(n_{1}r).$ In other words, that among
numbers $jr+p$, $j\allowbreak=\allowbreak1,\ldots,n$ at least one is divided
by $n_{1}.$ That means that the number (\ref{ro}) is a rational number that
has the denominator composed by the numbers that are not relatively prime to
$r$. Or in other words, are composed of powers of the divisors of $r.$ Now,
one has to count sums of powers of particular divisors less or equal then $n$.
Let us fix the divisor, let it be $d_{j}.$ Then there are $\left\lfloor
n/d_{j}\right\rfloor $ factors of the form $kd_{j},$ $k\allowbreak
=\allowbreak1,\ldots,$ then there are $\left\lfloor n/d_{j}^{2}\right\rfloor $
factors of the form $kd_{j}^{2}$ and so on. Now since we multiply those
factors (we had $n!$ before canceling out factor relatively prime to $r$) so
we have $d_{j}^{\left\lfloor n/d_{j}\right\rfloor +\left\lfloor n/d_{j}%
^{2}\right\rfloor +\ldots}$ in the denominator of (\ref{ro}). Now to get an
integer out of this number we have to multiply it by the denominator.
\end{proof}

\begin{remark}
If one considers the more general situation like e.g., $M_{n}%
(0,p/r,k-p/r)/4^{n}$ then the situation is not that simple. Namely, in the
denominator, there might appear prime factors that are different than the
prime factors of the denominator of parameter $\alpha$, i.e., $r$. For
example, if we consider the sequence $\forall n\geq1:$
\[
6\times3^{\sum_{j=0}^{\infty}\left\lfloor n/3^{j}\right\rfloor }%
2^{\left\lfloor n/2\right\rfloor }M_{n}(0,1/3,6-1/3)/4^{n},
\]
then the first few elements of this sequence are the following :
\newline$1,8/7,3,20/3,26/3,832/11,3952/33,1216/3,45600/7.$ Hence we have apart
of divisors of $3$ and $6$ we have also $7,$ $11$ and maybe others.
\end{remark}

\begin{remark}
Notice, that in general, the integer sequence
\[
\left\{  \left(  \frac{r}{4}\right)  ^{n}\prod_{j=1}^{k}d_{j}^{\sum
_{m=1}^{\infty}\left\lfloor n/d_{j}^{m}\right\rfloor }M_{n}%
(0,p/r,1-r/r)\right\}  _{n\geq0}%
\]
is not a moment sequence, even though $\left\{  r^{n}\right\}  $ and $\left\{
M_{n}(0,p/r,1-r/r)/4^{n}\right\}  _{n\geq0}$ are. This is so because $\left\{
\prod_{j=1}^{k}d_{j}^{\sum_{m=1}^{\infty}\left\lfloor n/d_{j}^{m}\right\rfloor
}\right\}  _{n\geq0}$ is not a moment sequence. On the other hand sequences
given by (\ref{beta_int}), allow to increase knowledge on some sequence
presented in OEIS. For example sequence (\ref{beta_int}), with $p/r\allowbreak
=\allowbreak1/3$ is listed as A004117 in OEIS or with $p/r\allowbreak
=\allowbreak1/8$ as A181161 in OEIS.
\end{remark}

Now, we will calculate moments $M_{n}(0,\alpha,\beta)$ for all values
$\alpha\allowbreak=\allowbreak n/2$ and $\beta\allowbreak=\allowbreak m/2, $
where $n$ and $m$ are natural numbers.

\section{First, partial results\label{first}}

\begin{lemma}
\label{pomoc} i) For natural $n$ and non-negative integer $i$we have:%
\[
(i+\frac{1}{2})^{(n)}\allowbreak=\allowbreak\frac{(2i+2n)!i!}{4^{n}%
(i+n)!(2i)!},~\left(  \frac{1}{2}\right)  _{(n)}\allowbreak=\allowbreak
(-1)^{n-1}\frac{2(2n-2)!}{4^{n}(n-1)!},
\]
setting $1$ for $n\allowbreak=\allowbreak0$ and for $n\geq1:$%
\begin{align*}
\left(  \frac{3}{2}\right)  _{(n)}\allowbreak &  =\allowbreak\left\{
\begin{array}
[c]{ccc}%
3/2 & \text{if} & n=1\\
(-1)^{n}\frac{12(2n-4)!}{4^{n}(n-2)!} & \text{if} & n>1
\end{array}
\right.  ,\\
\left(  \frac{5}{2}\right)  _{(n)}\allowbreak &  =\allowbreak\left\{
\begin{array}
[c]{ccc}%
5/2 & \text{if} & n=1\\
3/2 & \text{if} & n=2\\
(-1)^{n-3}\frac{120(2n-6)!}{4^{n}(n-3)!} & \text{if} & n>2
\end{array}
\right.  .
\end{align*}

ii) For $n\geq0:$%
\[
M_{n}\left(  0,i+\frac{1}{2},j+\frac{1}{2}\right)  \allowbreak=\allowbreak
\frac{\binom{2i+2n}{i+n}\binom{i+n}{n}}{\binom{2i}{i}\binom{i+j+n}{n}%
}\allowbreak=\allowbreak\frac{(2n+2i)!i!(i+j)!}{(i+n)!(2i)!(i+j+n)!},
\]
hence in particular:

iiA) $M_{n}(0,i+1/2,1/2)\allowbreak=\allowbreak\binom{2n+2i}{n+i}/\binom
{2i}{i}$,\newline iiB) $M_{n}(0,i+1/2,3/2)\allowbreak=\allowbreak
C_{i+n}/C_{i}$, \newline iiC) $M_{n}(0,i+1/2,5/2)\allowbreak=\allowbreak
((i+2)!i!(2n+2i)!)/((2i)!(n+i)!(n+i+2)!)$ \newline iiD) $M_{n}%
(0,i+1/2,7/2)\allowbreak=\allowbreak((i+3)!i!(2n+2i)!)/((2i)!(i+n)!(n+i+3)!),$
\newline iiE) $M_{n}(0,i+1/2,9/2)\allowbreak=\allowbreak
(i!(i+4)!(2n+2i)!)/((2i)!(i+n)!(n+i+4)!),$ \newline iiF) $M_{n}%
(0,1/2,j+1/2)\allowbreak=\allowbreak(j!(2n)!)/(n!(n+j)!),$ \newline iiG)
$M_{n}(0,3/2,j+1/2)\allowbreak=\allowbreak((j+1)!(2n+1)!)/(n!(n+j+1)!),$%
\newline iiH) $M_{n}(0,5/2,j+1/2)\allowbreak=\allowbreak
((j+2)!(2n+3)!)/((n+1)!(n+2+j)!)$, \newline iiI) $M_{n}%
(0,7/2,j+1/2)\allowbreak=\allowbreak((j+3)!(2n+5)!)/(60(n+2)!(n+j+3)!),$
\newline iiJ) $M_{n}(0,9/2,j+1/2)\allowbreak=\allowbreak
(4!(4+j)!(2n+8)!)/(8!(n+4)!(n+4+j)!)$.

iii) For $n\geq0:$%
\[
M_{n}(0,i+1/2,j)\allowbreak=\allowbreak4^{n}\frac{(2i+2j)!i!}{(2i)!(i+j)!}%
\frac{(2i+2n)!(i+j+n)!}{(i+n)!(2i+2j+2n)!},
\]
hence in particular we get:

iiiA) $M_{n}(0,1/2,1)\allowbreak=\allowbreak4^{n}/(2n+1),$

iiiB)$M_{n}(0,3/2,1)\allowbreak=\allowbreak3\times4^{n}/(2n+3),$

iiiC) $M_{n}(1/2,2)\allowbreak=\allowbreak3\times4^{n}/((2n+1)(2n+3)),$

iiiD) $M_{n}(0,3/2,2)\allowbreak=\allowbreak4^{n}15/((2n+3)(2n+5)).$

iv) For $n\geq0:$%
\[
M_{n}(0,i,j+1/2)\allowbreak=\allowbreak4^{2n}\frac{(i+n-1)!(i+j+n)!(2i+2j)!}%
{(2i+2j+2n)!(i-1)!(i+j)!},
\]
hence in particular we have:\newline ivA) $M_{n}(0,1,1/2)\allowbreak
=\allowbreak4^{2n}(n!)^{2}/(2n+1)!,$ \newline ivB) $M_{n}(0,1,3/2)\allowbreak
=\allowbreak4^{2n}12n!(n+2)!/(2n+4)!,$ \newline ivC) $M_{n}%
(0,2,1/2)\allowbreak=\allowbreak4^{2n}12(n+1)!(n+2)!/(2n+4)!,$\newline ivD)
$M_{n}(0,2,3/2)\allowbreak=\allowbreak$ $4^{2n}120(n+1)!(n+3)!/(2n+6)!,$
\newline ivE) $M_{n}(0,3,1/2)\allowbreak=\allowbreak60\times4^{n}%
(n+2)!(n+3)!/(2n+6)!$, \newline ivF) $M_{n}(0,4,1/2)\allowbreak=\allowbreak
4^{2n}280(n+4)!(n+3)!/(2n+8)!.$
\end{lemma}

\begin{proof}
i) Following definitions given by (\ref{down}) and (\ref{up}) we get
$(i+1/2)^{(n)}\allowbreak=\allowbreak(i+1/2)(i+3/2)\ldots
(i+1/2+n-1)\allowbreak=\allowbreak(2n+2i-1)!!/(2^{n}(2i-1)!!)$. Now, it is
elementary to check that $(i+1/2)^{(n)}\allowbreak=\allowbreak\frac
{(2i+2n)!i!}{4^{n}(i+n)!(2i)!}$. Generally, we have $(i+1/2)_{n}%
\allowbreak=\allowbreak(-1)^{n}(-i-1/2)^{n},$ hence we could use this formula
to get $(1/2)_{n},$ $(3/2)_{n}$ and $(5/2)_{n}$ but it seems that it might be
easier to check these formulae directly. ii) Applying assertion i) twice we
get ii). iii) and iv) we apply assertion i) once but in the case of iii) in
the numerator and in the case of iv) in denominator. v) Is direct application
of (\ref{Mm}) and (\ref{iMm}).
\end{proof}

Some sequences of integers, that will appear in the sequel, are identifiable
by their generating functions in the basis OEIS, hence we need to calculate
also the generating functions of the moments of the Jacobi distributions that
will appear in the sequel. In fact, there are two ways of calculating the
generating functions of moment sequences. The first one, so to say, direct,
uses a formula (\ref{mJ}) and the other uses the integral representation of
the moment sequence. The Lemma \ref{gen}, above lists some of the generating
functions and presents some of the ways to transform them.

Thus, as a corollary we have:

\begin{corollary}
\label{gf}Let us denote by $G(x;c,\alpha,\beta)\allowbreak=\allowbreak
\sum_{k\geq0}x^{k}M_{k}(c,\alpha,\beta)$. Then we have for $c\in\mathbb{R}$,
$\alpha,\beta>0$ $G(0,c,\alpha,\beta)\allowbreak=\allowbreak1$ while for
$x\neq0$ and $\left\vert x\right\vert <1/4$ we get:

a) $G(x;0,1/2,1/2)\allowbreak=\allowbreak1/\sqrt{1-4x}$,

b) $G(x;0,3/2,1/2)\allowbreak=\allowbreak(1-\sqrt{1-4x}/(2x\sqrt{1-4x})$,

c) $G(x;0,1/2,3/2)\allowbreak=\allowbreak\left(  1-\sqrt{1-4x}\right)  /(2x)$,

d) $G(x;0,1/2,5/2)\allowbreak=\allowbreak\left(  (1-4x)^{3/2}+6x-1\right)
/\left(  6x^{2}\right)  $,

e) $G(x;0,3/2,3/2)\allowbreak=\allowbreak\left(  1-2x-\sqrt{1-4x}\right)
/\left(  2x^{2}\right)  $,

f) $G(x;0,5/2,1/2)\allowbreak=\allowbreak\left(  1-\sqrt{1-4x}-2x\sqrt
{1-4x}\right)  /\left(  6x^{2}\sqrt{1-4x}\right)  $,

g) $G(x;0,1/2,7/2)\allowbreak=\allowbreak\left(  1-(1-4x)^{5/2}-10x+30x^{2}%
-20x^{3}\right)  /\left(  10x^{4}\right)  $,

h) $G(x;0,3/2,5/2)\allowbreak=\allowbreak\left(  (1-4x)\sqrt{1-4x}%
-1+6x-6x^{2}\right)  /\left(  4x^{3}\right)  $,

i) $G(x;0,5/2,3/2)\allowbreak=\allowbreak\left(  -1+(1-4x)^{1/2}%
+2x+2x^{2}\right)  /\left(  4x^{3}\right)  $,

j) $G(x;0,7/2,1/2)\allowbreak=\allowbreak\left(  1-(1-4x)^{1/2}(1+2x+6x^{2}%
)\right)  /\left(  20x^{3}\sqrt{1-4x}\right)  $,
\end{corollary}

In the presented below, identification of particular moments sequences in the
basis OEIS, we will use the following terminology: the sequence A is equal to
the $l-s(p_{1},p_{2}\ldots)$ sequence B meaning that the sequence A is
obtained from the sequence B by omitting the first elements that are equal to
$p_{1},p_{2},\ldots$. Similarly, the sequence A is $r-s(p_{1},p_{2},\ldots)$
sequence B means sequence the B is obtained from sequence A by adding at its
beginning numbers $p_{1},p_{2},\ldots$. Finally, if sequence the B is obtained
from sequence the A by sign-change briefly $sc$ if even elements of both
sequences are the same while odd elements have different signs by the same
absolute values.

\begin{remark}
In particular
\begin{equation}
\sum_{k\geq0}C_{n}/4^{n}=2. \label{2}%
\end{equation}
This is so since $C_{n}\allowbreak\cong\allowbreak4^{n}/n^{3/2}$ ,thus the
series above is convergent. Its sum can be found easily by passing to the
limit $x\rightarrow1/4$ in assertion $c$ of the Corollary \ref{gf}.
\end{remark}

\begin{remark}
\label{Pocz}a) $M_{n}(0,1/2,1/2)\allowbreak=\allowbreak\binom{2n}{n}$,
sequence A000984 in OEIS. It is called the sequence of "Central binomial",

b) $M_{n}(0,1/2,3/2)\allowbreak=\allowbreak\binom{2n}{n}/(n+1)$, sequence
A000108 in OEIS. It is called the sequence of "Catalan numbers",

c) $M_{n}(0,3/2,1/2)\allowbreak=\allowbreak\binom{2n+1}{n+1}\allowbreak
=\allowbreak\binom{2n+2}{n+1}/2,$ sequence A001700 in OEIS,

d) $M_{n}(0,1/2,5/2)\allowbreak=\allowbreak\left(  2(2n)!\right)  /\left(
n!(n+2)!\right)  $, $\frac{1}{3}\times$ "super ballot numbers" i.e., $\frac
{1}{3}\times$ of the sequence A007054 in OEIS,

e) $M_{n}(0,3/2,3/2)\allowbreak=\allowbreak\binom{2n+2}{n+1}/(n+2),$ $l-s(1)$
Catalan numbers', i.e., $l-s(1)$ i.e., sequence A000108 in OEIS,

f) $M_{n}(0,5/2,1/2)\allowbreak=\allowbreak\binom{2n+4}{n+2}/6,$ $\frac{1}{3}$
$l-s(1)$ sequence A001700 in OEIS,

g) $M_{n}(0,1/2,7/2)\allowbreak=\allowbreak\left(  6(2n+2)!\right)  /\left(
(n+1)!(n+4)!\right)  $,$\frac{1}{10}\times r-s(10)$ super ballot numbers i.e.,
$\frac{1}{10}\times r-s(10)$ of the sequence A007272 in OEIS,

h) $M_{n}(0,3/2,5/2)\allowbreak=\allowbreak\left(  3(2n+2)!\right)  /\left(
(n+1)!(n+3)!\right)  ,\frac{1}{2}\times r-s(3)$ super ballot numbers more
precisely $\frac{1}{2}\times r-s(3$ ) sequence A007054 in OEIS,

i) $M_{n}(0,5/2,3/2)\allowbreak=\allowbreak\binom{2n+4}{n+2}/\left(  2\left(
n+3\right)  \right)  ,\frac{1}{2}\times r-s(1,1)$ Catalan numbers,

j) $M_{n}(0,7/2,1/2)\allowbreak=\allowbreak\left(  (2n+6)!\right)  /\left(
20(n+3)!(n+3)!\right)  $, $\frac{1}{10}\times r-s(1,3)$ sequence A001700 in OEIS.
\end{remark}

\begin{corollary}
\label{bc}1) $4^{n}M_{n}(-3/4,1/2,1/2)$ is the sequence A322248 in OEIS, since
the g.f. of this sequence is $1/\sqrt{\left(  1-13x\right)  \left(
1+3x\right)  }$ which can be obtained by the formula given in assertion iii)
of Lemma \ref{gen}.

2) $4^{n}M_{n}(-7/4,1/2,1/2)$ is the sequence A098441 in OEIS, since the g.f.
of this sequence is $1/\sqrt{1-2x-63x^{2}}$ which can be obtained by the
formula given in assertion iii) of Lemma \ref{gen}.

3) $2^{n}M_{n}(-3/2,1/2,1/2)$ is the sequence A084605 in OEIS, since the g.f.
of this sequence is $1/\sqrt{1-2x-15x^{2}}$ which can be obtained by the
formula given in assertion iii) of Lemma \ref{gen}.

4) $2^{n}M_{n}(-3/2,1/2,1/2)$ is the sequence A084605 in OEIS, since the g.f.
of this sequence is $1/\sqrt{1-2x-15x^{2}}$ which can be obtained by the
formula given in assertion iii) of Lemma \ref{gen}.

5) $M_{n}(-1,1/2,1/2)$ is the sequence A002426 'central trinomial coefficient'
in OEIS, by assertion i) of Lemma \ref{binomial}.

6) $2^{n}M_{n}(-1/2,1/2,1/2)$ is the sequence A322242 in OEIS, since the g.f.
of this sequence is $1/\sqrt{1-6x-7x^{2}}$ which can be obtained by the
formula given in assertion iii) of Lemma \ref{gen}.

7) $4^{n}M_{n}(-1/4,1/2,1/2)$ is not listed in OEIS. The g.f. of this sequence
is \newline$1/\sqrt{1-14x-15x^{2}}$ which can be obtained by the formula given
in assertion iii) of Lemma \ref{gen}. Visibly this sequence is closely related
to sequence A098441 in OEIS by appropriate binomial transform (Lemma
\ref{binomial}).

8) $4^{n}M_{n}(1/4,1/2,1/2)$ is not listed in OEIS. The g.f. of this sequence
is $1/\sqrt{1-18x+17x^{2}}$ which can be obtained by the formula given in
assertion iii) of Lemma \ref{gen}. Visibly this sequence is closely related to
sequence A098441 in OEIS by appropriate binomial transform (Lemma
\ref{binomial}). .

9) $2^{n}M_{n}(1/2,1/2,1/2)$ is the sequence A084771 in OEIS, since the g.f.
of this sequence is $1/\sqrt{1-10x+9x^{2}}$ which can be obtained by the
formula given in assertion iii) of Lemma \ref{gen}.

10) $M_{n}(1,1/2,1/2)$ is the sequence A026375 in OEIS, since the g.f. of this
sequence is $1/\sqrt{1-6x+5x^{2}}$ which can be obtained by the formula given
in assertion iii) of Lemma \ref{gen}.

11) $4^{n}M_{n}(5/4,1/2,1/2)$ is not listed in OEIS. The g.f. of this sequence
is $1/\sqrt{1-26x+105x^{2}}$ which can be obtained by the formula given in
assertion iii) of Lemma \ref{gen}. Visibly this sequence is closely related to
sequence A098441 in OEIS by appropriate binomial transform (Lemma
\ref{binomial}).

12) $2^{n}M_{n}(3/2,1/2,1/2)$ is the sequence A248168 in OEIS, since the g.f.
of this sequence is $1/\sqrt{1-14x+33x^{2}}$ which can be obtained by the
formula in assertion iii) of Lemma \ref{gen}. given in assertion iii) of Lemma
\ref{gen}.

13) $M_{n}(2,1/2,1/2)$ is the sequence A081671 in OEIS, since the g.f. of this
sequence is $1/\sqrt{1-8x+12x^{2}}$ which can be obtained by the formula given
in assertion iii) of Lemma \ref{gen}.

14) $M_{n}(2,3/2,1/2)$ is the $l-s(1)$ sequence A005573 in OEIS since the g.f.
of this sequence is $\left(  \sqrt{1-2x}-\sqrt{1-6x}\right)  /(2x\sqrt{1-6x})$
which can be obtained by the formula given in assertion iii) of Lemma
\ref{gen}.

15) $2^{n}M_{n}(5/2,1/2,1/2)$ is not listed in OEIS. The g.f. of this sequence
is $1/\sqrt{1-18x+65x^{2}}$ that can be obtained by the formula given in
assertion iii) of Lemma \ref{gen}. Visibly this sequence is closely related to
sequence A084771 in OEIS by appropriate binomial transform (Lemma
\ref{binomial}).
\end{corollary}

Now, we examine some examples connected with Catalan numbers.

\begin{corollary}
\label{catalan} 1) $M_{n}(-1,1/2,3/2)$ 'Riordan numbers' i.e., sequence
A005043 in OEIS. This is so since by Lemma \ref{gen} its generating function
is equal to \newline$(1\allowbreak-\allowbreak\sqrt{(1-3x)/(1+x)})/(2x)$ as
given in unnumbered formula on page 87 of \cite{Bern99}.

2) $M_{n}(-1,3/2,3/2)$ sequence A001006 in OEIS the so-called Motzkin numbers.
This is so since its g.f. is $f(x)\allowbreak=\allowbreak(1-x-\sqrt
{1-2x-3x^{2}})/(2x^{2}),$ that satisfies the following equation $x^{2}%
f^{2}\allowbreak+\allowbreak(x-1)f\allowbreak+\allowbreak1\allowbreak
=\allowbreak0$ as given in Wikipedia.

3) $2^{n}M_{n}(-1/2,1/2,3/2)$ is the sequence A337168 in OEIS, since g.f. of
this sequence is $A(x)\allowbreak=\allowbreak\left(  -1+\sqrt{(1-7x)/(1+x)}%
\right)  /\left(  4x\right)  $ that can be obtained by the formula given in
assertion iii) of Lemma \ref{gen}. (There is small misprint in the formula for
g.f. in OEIS (there is $8$ instead of $7).$ However by simple check one can
see that the $A(x)$ satisfies the following identity $A(x)\allowbreak
=\allowbreak1/(x+1)+2xA(x)^{2}$).

4) $2^{n}M_{n}(-3/2,1/2,3/2)$ is not listed in OEIS, it's g.f. of this
sequence is $\left(  -1+\sqrt{(1-5x)/(1+3x)}\right)  /\left(  4x\right)  $
that can be obtained by the formula given in assertion iii) of Lemma
\ref{gen}. Visibly this sequence is closely related to sequence A337168 in
OEIS by appropriate binomial transform. Compare Lemma \ref{binomial}.

5) $2^{n}M_{n}(1/2,1/2,3/2)$ is the $r-s(1)$ sequence A162326 in OEIS. This is
so since g.f. of $2^{n}M_{n}(1/2,1/2,3/2)$ is $(1\allowbreak-\allowbreak
\sqrt{(1-9x)/(1-x)})/(4y)$ hence the g.f. of the sequence A162326 is
$1\allowbreak+\allowbreak x(1\allowbreak-\allowbreak\sqrt{(1-9x)/(1-x)}%
)/(4y)\allowbreak=\allowbreak(5\allowbreak-\allowbreak\sqrt{(1-9x)/(1-x)})/4$.

6) $M_{n}(1,1/2,3/2)$ sequence A007317 in OEIS defined as 'binomial transform
of Catalan numbers' and its indeed since we have Lemma \ref{binomial} and
following it remark.

7) $M_{n}(1,3/2,1/2)$ is not listed in OEIS, However by (\ref{Mm}) we see that
it is a binomial transform of the sequence A001700 of OEIS.

8) $2^{n}M_{n}(3/2,1/2,3/2)$ sequence not listed in OEIS. Its g.f. is equal to
$(1-\sqrt{\left(  1-11x\right)  /(1-3x)}/(4x)$. Visibly this sequence is
closely related to sequence A162326 in OEIS by appropriate binomial transform.
Compare Lemma \ref{binomial}.

9) $M_{n}(2,1/2,3/2)$ sequence A064613 in OEIS defined as ' second binomial
transform of Catalan numbers' its g.f. is $(1-\sqrt{(1-6x)/(1-2x)})/(2x).$
\end{corollary}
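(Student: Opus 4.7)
The plan is to treat all eight identifications by a single uniform mechanism. For each item the claimed OEIS sequence equals $k^{n}M_n(c,\alpha,\beta)$ with $(\alpha,\beta)\in\{(1/2,3/2),(3/2,3/2)\}$, so I would start from the two base generating functions
\[
G(x;0,1/2,3/2)=\frac{1-\sqrt{1-4x}}{2x},\qquad G(x;0,3/2,3/2)=\frac{1-2x-\sqrt{1-4x}}{2x^{2}},
\]
already recorded in parts (c) and (e) of the preceding Corollary, and push them through the two elementary transformations provided by Lemma \ref{binomial} i) combined with Lemma \ref{gen} iii): passage from $c=0$ to general $c$ sends $g(x)\mapsto \frac{1}{1-cx}\,g\!\bigl(\frac{x}{1-cx}\bigr)$, and the extra factor $k^{n}$ is absorbed by the dilation $x\mapsto kx$.

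With this machine in hand, each part reduces to a short algebraic simplification. For item (1) I would plug $g(x)=\frac{1-\sqrt{1-4x}}{2x}$ and $c=-1$ into the recipe; the inner radical collapses to $\sqrt{(1-3x)/(1+x)}$ and one reads off the Riordan generating function quoted from \cite{Bern99}. Item (2) is identical with $g(x)=\frac{1-2x-\sqrt{1-4x}}{2x^{2}}$ and $c=-1$, and the only extra step is to verify the Motzkin quadratic $x^{2}f^{2}+(x-1)f+1=0$ on the simplified form $\frac{1-x-\sqrt{1-2x-3x^{2}}}{2x^{2}}$, which is a direct algebraic check. Items (3), (4), (7) and (8) differ only in the scalar $k\in\{2\}$ and the shift $c\in\{\pm 1/2,\pm 3/2,3/2\}$; each reduces in two lines to the advertised $\sqrt{(1-ax)/(1-bx)}$ form and is then matched to the corresponding OEIS entry or flagged as closely related via binomial transform, exactly as in Corollary \ref{bc}.

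Item (6) I would handle directly from (\ref{Mm}) without passing through generating functions: for $c=1$, $(\alpha,\beta)=(1/2,3/2)$ the formula gives $M_n(1,1/2,3/2)=\sum_{j=0}^{n}\binom{n}{j}C_{j}$, which matches the OEIS definition of A007317 verbatim. Item (5) is the only place that warrants any real care: after applying the recipe I obtain $\frac{1-\sqrt{(1-9x)/(1-x)}}{4x}$ as the g.f.\ of $2^{n}M_n(1/2,1/2,3/2)$; to reconcile with the OEIS entry one has to account for the prefix $1$ implicit in the $r$-$s(1)$ notation, i.e., verify the elementary identity $1+x\cdot\frac{1-\sqrt{(1-9x)/(1-x)}}{4x}=\frac{5-\sqrt{(1-9x)/(1-x)}}{4}$. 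The main obstacle is thus purely bookkeeping: keeping the signs, shifts, and dilation factors straight so that each radical collapses to exactly the form listed in OEIS. There is no genuine analytic content beyond Lemma \ref{binomial} and Lemma \ref{gen}; the quadratic verification in (2) and the index-shift reconciliation in (5) are the only spots meriting explicit display.
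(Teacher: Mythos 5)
Your proposal is correct and follows essentially the same route as the paper: the corollary's own justification consists precisely of feeding $G(x;0,1/2,3/2)$ and $G(x;0,3/2,3/2)$ through the substitution $g(x)\mapsto\frac{1}{1-cx}\,g\left(\frac{x}{1-cx}\right)$ of Lemma \ref{gen} iii) (with the dilation $x\mapsto kx$ absorbing the scalar factor $k^{n}$), and handling item 6) directly from (\ref{Mm}) as the binomial transform of the Catalan numbers. The only differences are bookkeeping; incidentally, your computation for item 3) yields $\frac{1-\sqrt{(1-7x)/(1+x)}}{4x}$, which is the correct form since the series must start with $1$ (the numerator sign printed in the corollary is a typo).
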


Now, we will present some assorted examples that seem to be important from the
point of view of combinatorics.

\begin{corollary}
1) $M_{n}(1,3/2,3/2)$ is the $r-s(1)$ sequence A002212 in OEIS .

2) $M_{n}(2,3/2,3/2)$ sequence A005572 in OEIS, \newline since its g.f. is
$(1\allowbreak-\allowbreak4x\allowbreak-\allowbreak\sqrt{1-8x+x^{2}}%
)/(2x^{2})$

3) $M_{n}(3,3/2,3/2)$ sequence A182401 in OEIS, \newline since its g.f. is
$(1\allowbreak-\allowbreak5x\allowbreak-\allowbreak\sqrt{1-10x+21x^{2}%
})/(2x^{2})$

4) $M_{n}(1/2,3/2,3/2)$ is the $r-s(1)$ sequence A059231 in OEIS, \newline
since its g.f. is $f(x)\allowbreak=\allowbreak(1\allowbreak-\allowbreak
5x\allowbreak-\allowbreak\sqrt{1-10x+9x^{2}})/(8x^{2}),$ \newline hence
$1\allowbreak+\allowbreak xf(x)\allowbreak=\allowbreak\allowbreak
(1\allowbreak+\allowbreak3x\allowbreak-\allowbreak\sqrt{1-10x+9x^{2}})/(8x).$
\end{corollary}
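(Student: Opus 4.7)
My plan is to derive each of the four generating functions by applying Lemma \ref{gen} iii) to the already-computed $G(x;0,3/2,3/2) = (1-2x-\sqrt{1-4x})/(2x^2)$ from the preceding Corollary. By (\ref{Mm}) in Lemma \ref{binomial}, the sequence $\{M_n(c,\alpha,\beta)\}_{n\ge 0}$ is precisely the $c$-shifted binomial transform of $\{M_n(0,\alpha,\beta)\}_{n\ge 0}$, so Lemma \ref{gen} iii) gives
$$G(x;c,3/2,3/2) \;=\; \frac{1}{1-cx}\, G\!\left(\frac{x}{1-cx};\,0,3/2,3/2\right).$$
First I would carry out this substitution in closed form. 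Writing $u = x/(1-cx)$, one has $1-2u = (1-(c+2)x)/(1-cx)$ and $1-4u = (1-(c+4)x)/(1-cx)$, while the $2u^2$ in the denominator contributes a factor $(1-cx)^2/(2x^2)$. Collecting terms and cancelling one factor of $1-cx$ against the prefactor produces the master formula
$$G(x;c,3/2,3/2) \;=\; \frac{1-(c+2)x - \sqrt{(1-cx)(1-(c+4)x)}}{2x^{2}}.$$

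The remaining task is to specialise $c$ and match each resulting series to its OEIS entry. For (1), $c=1$ yields $(1-3x-\sqrt{1-6x+5x^2})/(2x^2)$, and the $r$-$s(1)$ identification with A002212 follows by checking that $1 + x\,G(x;1,3/2,3/2)$ coincides with the g.f. listed for A002212. Parts (2) and (3) are exactly parallel: the product $(1-cx)(1-(c+4)x)$ expands to $1-8x+12x^2$ and $1-10x+21x^2$ respectively, matching the OEIS generating functions of A005572 and A182401. For (4), the master formula with $c=1/2$ gives $(1-\frac{5}{2}x - \sqrt{1-5x+\frac{9}{4}x^2})/(2x^2)$; the form displayed in the statement is then obtained by the rescaling $x\mapsto 2x$ (equivalently, multiplying the $n$-th moment by $2^n$), after which $1+xf(x)$ and a short algebraic simplification yield the $r$-$s(1)$ identification with A059231.

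The anticipated obstacle is bookkeeping with the square roots and the various constant rescalings: a single slip in the $c=1/2$ normalisation changes the claimed generating function by a power of $2$. To guard against this I would cross-check each specialisation against the first few moments, using $M_n(0,3/2,3/2) = C_{n+1}$ (Remark \ref{Pocz}e)) and the binomial sum (\ref{Mm}) to compute $M_0,M_1,M_2$ by hand and compare them to the initial coefficients of the target OEIS sequence. The OEIS identification itself is then a direct comparison of closed-form expressions.
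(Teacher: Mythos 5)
Your proposal is correct and follows the route the paper itself relies on: apply Lemma \ref{gen} iii) to $G(x;0,3/2,3/2)=(1-2x-\sqrt{1-4x})/(2x^{2})$ from the preceding Corollary, and your master formula $G(x;c,3/2,3/2)=\bigl(1-(c+2)x-\sqrt{(1-cx)(1-(c+4)x)}\bigr)/(2x^{2})$ together with the numerical cross-checks against $M_{n}(0,3/2,3/2)=C_{n+1}$ is exactly the intended verification. Two incidental merits of your computation: the expansion $(1-2x)(1-6x)=1-8x+12x^{2}$ shows that the paper's $\sqrt{1-8x+x^{2}}$ in part 2) is a misprint for $\sqrt{1-8x+12x^{2}}$, and you correctly identified that part 4) as printed actually concerns $2^{n}M_{n}(1/2,3/2,3/2)$ (the displayed $f(x)$ is the master formula after $x\mapsto 2x$), a normalisation the paper leaves implicit.
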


Sometimes the description of the sequence in the OEIS is insufficient. Either
the sequence generator function is not given, or the formula for the $n$-the
item in the sequence is missing. However, in some cases, one observes that the
first several elements of the sequence $\{M_{n}(c,\alpha,\beta)\}$ for some
values of parameters $c,\alpha,\beta$ agree with the elements of the sequence
from the base OEIS. Then there exists a strong supposition that these two
sequences are identical. We will present a few of these suppositions in the
form of conjectures, presented below.

\begin{conjecture}
1. $M_{n}(-1,3/2,1/2)$ is the sequence A005773 of OEIS. More precisely it is
an inverse Binomial transform of the sequence A001700.

2. $2^{n}M_{n}(-3/2,3/2,1/2)$ is the sequence A151318 of OEIS.

3. $2M_{n}(-1,5/2,3/2)$ is the sequence A005554 in OEIS.

4. $2M_{n}(1,5/2,3/2)$ is the sequenceA045868 in OEIS.
\end{conjecture}

Notice, that the distribution with the density $g(x;-2,\alpha,\alpha)$ is
symmetric hence all odd moments of the form $M_{n}(-2,\alpha,\alpha)$ are
equal to $0$. Besides the support of this distribution is symmetric $[-2,2].$
All these reasons suggest that it should be in a special way. Namely, we will
denote by%
\begin{align*}
g(y;-2,\gamma,\delta)\overset{def}{=}b(y;\gamma,\delta) &  =\frac
{(y+2)^{\gamma-1}(2-y)^{\delta-1}}{4^{\gamma+\delta-1}B(\gamma,\delta)},\\
S_{n}(\gamma,\delta)\allowbreak &  =\allowbreak\int_{-2}^{2}x^{n}%
b(x;\gamma,\delta)dx.
\end{align*}

\begin{remark}
\label{symm}i) Notice, also, that we have for all $n\geq0,$ $\gamma,\delta>0:$%
\[
S_{n}(\delta,\gamma)\allowbreak=\allowbreak(-1)^{n}S_{n}(\gamma,\delta).
\]
This is so since we have for $\gamma,\delta>0$ and $x\in\lbrack-2,2]:$
\[
b(x;\gamma,\delta)\allowbreak=\allowbreak b(-x;\delta,\gamma).
\]
Using Lemma \ref{binomial}, we have:%
\begin{align*}
S_{n}(\gamma,\delta)  &  =\sum_{j=0}^{n}\binom{n}{j}(-2)^{n-j}M_{j}%
(0,\gamma,\delta),\\
M_{n}(0,\gamma,\delta)  &  =\sum_{j=0}^{n}\binom{n}{j}2^{n-j}S_{j}%
(\gamma,\delta).
\end{align*}

\end{remark}

As far as the values of the moments $S_{n}$ are concerned, we have:

\begin{proposition}
\label{Cent}a) $S_{n}(1/2,1/2)=\left\{
\begin{array}
[c]{ccc}%
0 & \text{if} & n\text{ is odd}\\
\binom{n}{n/2} & \text{if} & n\text{ is even}%
\end{array}
\right.  $, sequence A126869 in OEIS,

b) $S_{n}(1,1)=\left\{
\begin{array}
[c]{ccc}%
0 & \text{if} & n\text{ is odd}\\
\frac{2^{n}}{(n+1)} & \text{if} & n\text{ is even}%
\end{array}
\right.  $,

c) $S_{n}(3/2,3/2)\mathbb{\allowbreak}\mathbb{=\allowbreak}\left\{
\begin{array}
[c]{ccc}%
0 & \text{if} & n\text{ is odd}\\
C_{n/2} & \text{if} & n\text{ is even}%
\end{array}
\right.  $, sequence A126120 in OEIS,

d) $S_{n}(2,2)=\left\{
\begin{array}
[c]{ccc}%
0 & \text{if} & n\text{ is odd}\\
\frac{3\times2^{n}}{(n+1)(n+3)} & \text{if} & n\text{ is even}%
\end{array}
\right.  $,

e) $S_{n}(1/2,3/2)\allowbreak=\allowbreak(-1)^{n}\binom{n}{\left\lfloor
n/2\right\rfloor }$, sequence A126930 in OEIS, since its g.f. is
$(1\allowbreak-\allowbreak\sqrt{(1-2x)/(1+2x)})/(2x)$, while $S_{n}%
(3/2,1/2)\allowbreak=\allowbreak\binom{n}{\left\lfloor n/2\right\rfloor }.$
Strangely it has different number in OEIS (A001405),

f) $S_{n}(1,2)\allowbreak=\allowbreak\left\{
\begin{array}
[c]{ccc}%
\frac{-2^{n}}{n+2} & \text{if} & n\text{ is odd}\\
\frac{2^{n}}{n+1} & \text{if} & n\text{ is even}%
\end{array}
\right.  $, while $S_{n}(2,1)\allowbreak=\allowbreak=\left\{
\begin{array}
[c]{ccc}%
\frac{2^{n}}{n+2} & \text{if} & n\text{ is odd}\\
\frac{2^{n}}{n+1} & \text{if} & n\text{ is even}%
\end{array}
\right.  $, by Remark \ref{symm}.

g) $2S_{n}(3/2,5/2)$ is the $r-s(1,1)$ and $sc$ sequence A089408 in OEIS since
the g.f. of $2S_{n}(3/2,5/2)$ is $g(x)\allowbreak=\allowbreak(-6x^{2}%
\allowbreak+\allowbreak6x\allowbreak-\allowbreak1\allowbreak+\allowbreak
(1-4x)^{3/2})/2x^{3}$ by Lemma \ref{gen}. Operation $r-s(1,1)$ changes this
function to $1-x\allowbreak+\allowbreak x^{2}g(x)\allowbreak=\allowbreak
(4x\allowbreak-\allowbreak1\allowbreak+\allowbreak(1-2x)\sqrt{1-4x^{2}}%
)/(2x)$. Now it remains to change $x$ to $-x.$ By Remark \ref{symm}
$2S_{n}(5/2,3/2)$ is $r-s(1,1)$ sequence A089408 in OEIS, since following its
description remains to change $x$ to $-x$ in $(4x\allowbreak-\allowbreak
1\allowbreak+\allowbreak(1-2x)\sqrt{1-4x^{2}})/(2x)$.
\end{proposition}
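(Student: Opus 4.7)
The plan is to handle the seven items in three groups, using three complementary tools: the parity/binomial-transform identity $S_n(\gamma,\delta) = \sum_{j=0}^n \binom{n}{j}(-2)^{n-j} M_j(0,\gamma,\delta)$ from Remark \ref{symm}; direct integration against the explicit density $b(x;\gamma,\delta)$; and the generating-function substitution of Lemma \ref{gen}(iii) with parameter $c = -2$, which sends the g.f.\ of $\{M_n(0,\gamma,\delta)\}$ to the g.f.\ of $\{S_n(\gamma,\delta)\}$ via $g(x) \mapsto \frac{1}{1+2x}\, g\bigl(\tfrac{x}{1+2x}\bigr)$.

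For the symmetric items (a)--(d) I would first observe that $b(x;\gamma,\gamma) = (4-x^2)^{\gamma-1}/(4^{2\gamma-1}B(\gamma,\gamma))$ is even in $x$, so $S_n(\gamma,\gamma) = 0$ for odd $n$. For $n = 2k$ the substitution $u = (x/2)^2$ on $[0,2]$ collapses the integral to
\[
S_{2k}(\gamma,\gamma) = \frac{2 \cdot 4^{k+\gamma-1}}{4^{2\gamma-1} B(\gamma,\gamma)}\, B\!\left(k+\tfrac12,\gamma\right),
\]
and specializing $\gamma \in \{1/2,1,3/2,2\}$ together with elementary Gamma-function simplifications yields the four closed forms; the case $\gamma = 3/2$ produces $C_k$ through $\Gamma(k+1/2)\Gamma(3/2)/\Gamma(k+2) = \pi(2k)!/(2\cdot 4^k\, k!(k+1)!)$, essentially the Wigner semicircle moment computation.

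The asymmetric items (e) and (f) I would handle differently. Item (f) is just direct integration against $b(x;1,2) = (2-x)/8$, splitting into two elementary monomial integrals over $[-2,2]$; the reflection identity in Remark \ref{symm} then hands $S_n(2,1)$ back for free. For (e), starting from $G(x;0,1/2,3/2) = (1-\sqrt{1-4x})/(2x)$ and applying the g.f.\ substitution with $c=-2$, the identity $1 - 4x/(1+2x) = (1-2x)/(1+2x)$ produces the g.f.\ $(1-\sqrt{(1-2x)/(1+2x)})/(2x)$ quoted for A126930; using $\sqrt{(1-2x)/(1+2x)} = (1-2x)/\sqrt{1-4x^2}$ and the series $1/\sqrt{1-4x^2} = \sum_{k \geq 0} \binom{2k}{k}\, x^{2k}$, the coefficients split by parity into $\binom{2k}{k}$ and $-\binom{2k+2}{k+1}/2 = -\binom{2k+1}{k}$, matching $(-1)^n \binom{n}{\lfloor n/2 \rfloor}$. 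The mirror claim for $S_n(3/2,1/2)$ follows again from Remark \ref{symm}.

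Item (g) is the main obstacle. Applying Lemma \ref{gen}(iii) to $G(x;0,3/2,5/2)$ from the Corollary, the simplification $(1+2x)^{1/2}(1-2x)^{3/2} = (1-2x)\sqrt{1-4x^2}$ makes the radicals collapse and produces, after doubling, the g.f.\ of $\{2 S_n(3/2,5/2)\}$ in the form $g(x) = (-1+2x+2x^2+(1-2x)\sqrt{1-4x^2})/(2x^3)$. Two further manipulations complete the OEIS identification: the operation $r\text{-}s(1,1)$ acts in g.f.\ language as $g \mapsto 1 - x + x^2 g(x)$ and collapses to $(4x-1+(1-2x)\sqrt{1-4x^2})/(2x)$, after which the sign flip $x \mapsto -x$ matches the OEIS generating function of A089408; the companion statement for $2 S_n(5/2,3/2)$ then follows from Remark \ref{symm}. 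The bookkeeping of the radical simplification in the g.f.\ substitution, together with the sequential $r\text{-}s$ and sign-change operations, is the only non-routine ingredient.
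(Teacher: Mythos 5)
Your proposal is correct and follows essentially the route the paper intends: the Proposition is stated in the paper without a formal proof, the embedded justifications being the reflection/binomial-transform identities of Remark \ref{symm} and the substitution of Lemma \ref{gen}(iii) with $c=-2$, which is exactly what you use for items (e)--(g). For the symmetric cases (a)--(d) your direct reduction $S_{2k}(\gamma,\gamma)=\frac{2\cdot 4^{k+\gamma-1}}{4^{2\gamma-1}B(\gamma,\gamma)}\,B\!\left(k+\tfrac12,\gamma\right)$ is a clean, equivalent alternative to pushing the explicit values of $M_n(0,\gamma,\gamma)$ through the binomial transform, and your specializations (arcsine, uniform, semicircle, and $\gamma=2$) all check out. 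One point worth recording: in item (g) your generating function $\bigl(-1+2x+2x^2+(1-2x)\sqrt{1-4x^2}\bigr)/(2x^3)$ for $\{2S_n(3/2,5/2)\}$ is the correct one, whereas the expression $(-6x^2+6x-1+(1-4x)^{3/2})/(2x^3)$ printed in the Proposition is in fact the generating function of $\{2M_n(0,3/2,5/2)\}$ \emph{before} the $c=-2$ substitution; only your substituted version is consistent with the paper's own subsequent step $1-x+x^2g(x)=(4x-1+(1-2x)\sqrt{1-4x^2})/(2x)$. So your write-up not only reproduces the result but silently repairs a misprint in the source.
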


\begin{remark}
Taking into account Corollary \ref{catalan}2) identifying Motzkin numbers by
their generating function, identity \ref{MbMc} with $b\allowbreak
=\allowbreak-1$ and $c\allowbreak=\allowbreak-2$ and assertion c) of
Proposition \ref{Cent}, we immediately arrive to the formula relating Motzkin
numbers and Catalan numbers, namely:%
\begin{equation}
Mo_{n}\allowbreak=\allowbreak\sum_{j=0}^{\left\lfloor n/2\right\rfloor }%
\binom{n}{2j}C_{j}, \label{Motz}%
\end{equation}
where $Mo_{n}$ denotes $n-th$ Motzkin number. The formula above relating
Motzkin and Catalan numbers appears in the solution of problem 4 in the
Stanley's book \cite{Stan15}.
\end{remark}

\section{Expansions\label{expan}}

In this section, we are going to get some interesting identities involving the
mentioned-above moments similar to the ones presented in the introduction.
These identities will be obtained by simple expansions of the ratio of the
densities of the involved beta distributions. To avoid unnecessary
complications we will apply these expansions only in two cases. Namely, let us
take real $\gamma$ such that $\alpha-\gamma$ is an integer then we can expand
the ratio of $b(x;\alpha,\beta)/b(x;\gamma,\delta)$ in the following series:%
\[
\frac{b(x;\alpha,\beta)}{b(x;\gamma,\delta)}=\frac{B(\gamma,\delta
)4^{2\delta-\beta-1}}{B(\alpha,\beta)}\sum_{k\geq0}\frac{x^{\alpha-\gamma+k}%
}{4^{k+\alpha-\gamma}}\frac{(\beta-\delta)_{k}}{k!}.
\]
and consequently, we can relate moment sequences of the two distributions one
with the density $b(x;\alpha,\beta)$ and the other with the density
$b(x;\alpha,\beta)$.%

\begin{equation}
M_{n}(0,\alpha,\beta)=\frac{B(\gamma,\delta)4^{\gamma-\alpha+\delta-\beta}%
}{B(\alpha,\beta)}\sum_{k\geq0}\frac{(\beta-\delta)_{k}}{k!4^{k+\alpha-\gamma
}}M_{k+n+\alpha-\gamma}(0,\gamma,\delta). \label{exp}%
\end{equation}
We will also exploit the similar following trick, related to the expansion of
the ratio $g(x;\alpha,\beta)/g\left(  x;\gamma,\delta\right)  .$ Namely, we
consider the following expansion:%
\begin{equation}
b\left(  x;\alpha,\beta\right)  \allowbreak=\allowbreak\frac{B(\gamma,\delta
)}{4^{\alpha-\gamma+\beta-\delta}B(\alpha,\beta)}b(x;\gamma,\delta)\sum
_{k\geq0}\frac{x^{k}}{4^{k}k!}c_{k}(\alpha-\gamma,\beta-\delta), \label{exp2}%
\end{equation}
based on the fact that
\[
g(x;\alpha,\beta)/g\left(  x;\gamma,\delta\right)  \allowbreak=\allowbreak
\frac{B(\gamma,\delta)}{4^{\alpha-\gamma+\beta-\delta}B(\alpha,\beta
)}(y+2)^{\alpha-\gamma}(2-y)^{\beta-\delta},
\]
expansion (\ref{bin}) and the standard multiplication of power series. Thus we
have:
\begin{equation}
c_{k}(\alpha-\gamma,\beta-\delta)=\sum_{j=0}^{k}\binom{k}{j}(-1)^{k-j}%
(\alpha-\gamma)_{j}(\beta-\delta)_{k-j}. \label{ck}%
\end{equation}
(\ref{exp2}) leads to the following expansion involving moments $S_{n}.$%
\begin{equation}
S_{n}(\alpha,\beta)=\frac{B(\gamma,\delta)}{4^{\alpha-\gamma+\beta-\delta
}B(\alpha,\beta)}\sum_{k\geq0}\frac{c_{k}(\alpha-\gamma,\beta-\delta)}%
{4^{k}k!}S_{n+k}(\gamma,\delta). \label{exp3}%
\end{equation}

As far as the question of convergence of the series (\ref{exp}) and
(\ref{exp3}) is concerned, we have the following remarks.

\begin{proposition}
\label{zbi}$\forall n\geq0$:

i) $\left\vert M_{n}(0,\alpha,\beta)\right\vert <4^{n},$ $\left\vert
S_{n}(\alpha,\beta)\right\vert <2^{n}$,

ii) $\left\vert M_{n}(0,i+1/2,j+1/2)\right\vert \allowbreak=\allowbreak
4^{n}O(1/n^{j+1/2}),$

iii) $\left\vert M_{n}(0,i+1/2,j)|\allowbreak=\allowbreak|4^{n}\left(
(2n+2i)!i!(i+j)!\right)  /\left(  (i+n)!(2i)!(i+j+n)!\right)  \right\vert
\allowbreak$\newline$=\allowbreak4^{2n}O(1/n^{j+1/2})$

iv) $\left\vert M_{n}(0,i,j+1/2)\allowbreak\right\vert =\allowbreak$%
\newline$\left\vert 4^{2n}\left(  (i+n-1)!(i+j+n)!(2i+2j)!\right)  /\left(
(2i+2j+2n)!(i-1)!(i+j)!\right)  \right\vert $\newline$\allowbreak
=\allowbreak4^{n}O(1/n^{j+1/2}).$
\end{proposition}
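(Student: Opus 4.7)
My plan is to dispose of (i) by the trivial maximum-value bound on the integrand, and to treat (ii)--(iv) by substituting the closed-form moment formulas established in Lemma \ref{pomoc} and applying Stirling's approximation to extract the asymptotic orders.

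For (i), the density $g(x;0,\alpha,\beta)$ is a probability density supported on $[0,4]$ and not concentrated at an endpoint, so
\[
0 < M_n(0,\alpha,\beta) = \int_0^4 x^n g(x;0,\alpha,\beta)\,dx < 4^n \int_0^4 g(x;0,\alpha,\beta)\,dx = 4^n.
\]
The bound $|S_n(\alpha,\beta)| < 2^n$ follows by the same argument applied to the density $b(x;\alpha,\beta)$ on $[-2,2]$, after pulling the absolute value inside the integral and bounding $|x|^n \le 2^n$ pointwise.

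For (ii)--(iv) the method is uniform. Each closed form from Lemma \ref{pomoc} is a product of one or two factorials of shape $(2m)!$ with $m$ linear in $n$, divided by factorials of shape $m!$ with $m$ linear in $n$, times a constant depending only on $i,j$. Writing $(2m)!/(m!)^2 = \binom{2m}{m} \sim 4^m/\sqrt{\pi m}$ extracts the appropriate power of $4$ together with a factor $1/\sqrt{n}$, while any surviving ratio $k!/(k+\ell)! = 1/\bigl((k+1)(k+2)\cdots(k+\ell)\bigr)$ with $k = \Theta(n)$ contributes an extra $O(1/n^\ell)$. Concretely for (ii), I will rewrite
\[
M_n(0,i+\tfrac12,j+\tfrac12) = \frac{i!(i+j)!}{(2i)!}\binom{2n+2i}{n+i}\cdot\frac{(n+i)!}{(n+i+j)!},
\]
and combine Stirling's estimate for the central binomial with the elementary $O(1/n^j)$ bound on the trailing factorial ratio. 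For (iii), two such Stirling substitutions will be used (one for $(2n+2i)!$ and one for $(2n+2i+2j)!$ in the formula of Lemma \ref{pomoc} iii), and their powers of $4$ combine with the prefactor $4^n$ appearing there; part (iv) is entirely analogous, starting from its displayed formula in Lemma \ref{pomoc} iv.

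The main obstacle will be simply careful bookkeeping: tracking how the exponents of $4$ from the $\binom{2m}{m}$ asymptotics interact with the prefactors in Lemma \ref{pomoc}, and verifying that the leftover polynomial ratios of factorials produce exactly the asserted order of decay in $n$. There is no deeper conceptual step beyond Stirling's formula and the elementary estimate for ratios of nearby factorials.
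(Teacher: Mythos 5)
Your proposal is correct and follows essentially the same route as the paper: part (i) by the trivial support bound, and parts (ii)--(iv) by writing each closed form from Lemma \ref{pomoc} as a constant times central binomial coefficients (estimated via $\binom{2m}{m}\sim 4^m/\sqrt{\pi m}$) times ratios of nearby factorials (estimated as $O(n^{\pm\ell})$). One caution on (iii): if you faithfully start from the formula actually proved in Lemma \ref{pomoc} iii), the two central-binomial factors' powers of $4$ cancel and you obtain $M_n(0,i+1/2,j)=4^nO(1/n^{j})$, not the stated $4^{2n}O(1/n^{j+1/2})$ --- the displayed middle expression in the Proposition's part (iii) is in fact $4^nM_n(0,i+1/2,j+1/2)$ rather than $M_n(0,i+1/2,j)$, so the discrepancy lies in the statement (and the paper's own proof, which works from that mismatched expression), not in your method.
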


\begin{proof}
i) This fact follows directly from the supports of the measures whose moments
are considered and the fact that the measures concerned are absolutely
continuous with respect to Lebesgue measure, i.e., have densities. Namely, in
the first case it is the segment $[0,4]$ while in the second case the segment
$[-2,2].$ For ii), iii) and iv) we start with the fact that $\binom{2n}%
{n}\frac{1}{4^{n}}\cong O(1/n^{1/2})$ as $n\rightarrow\infty$. Secondly, let
us notice that $(n+k)!/n!\allowbreak=(n+1)\allowbreak\ldots(n+k)\allowbreak
=\allowbreak O(n^{k}).$ Hence, for the case ii) we have
\[
\left\vert \frac{(2n+2i)!i!(i+j)!}{(i+n)!(2i)!(i+j+n)!}\right\vert
\allowbreak\cong\allowbreak\frac{i!(i+j)!}{(2i)!}\frac{4^{n+i}}{\sqrt{n+i}%
}\frac{(n+i)!}{(n+i+j+n)!}\allowbreak\cong\allowbreak O(4^{n}/n^{j+1/2}),
\]
iii)%
\[
\left\vert 4^{n}\frac{(2n+2i)!i!(i+j)!}{(i+n)!(2i)!(i+j+n)!}\right\vert
\allowbreak\cong\allowbreak4^{n}\frac{i!(i+j)!}{(2i)!}\binom{2n+2i}{n+i}%
\frac{(n+i)!}{(i+j+n)!}\allowbreak\cong\allowbreak4^{2n}O(1/n^{j+1/2}),
\]
iv) $\allowbreak$%
\begin{align*}
\left\vert 4^{2n}\frac{(i+n-1)!(i+j+n)!(2i+2j)!}{(2i+2j+2n)!(i-1)!(i+j)!}%
\right\vert \allowbreak &  \cong\\
\allowbreak4^{n}\frac{(2i+2j)!}{(i-1)!(i+j)!}\left(  4^{n}/\binom
{2i+2j+2n}{i+j+n}\right)  \frac{(i+n-1)!}{(i+j+n)!}\allowbreak &
\cong\allowbreak O(1/n^{j+1-1/2}).
\end{align*}

\end{proof}

\begin{theorem}
\label{c=0}For $n\geq0$ we have:

i)
\begin{equation}
C_{n}=2\binom{2n}{n}-\frac{1}{2}\binom{2(n+1)}{n+1}. \label{Cnabin}%
\end{equation}

ii)
\begin{align}
C_{2n+1}  &  =\sum_{i=0}^{n}\binom{2n}{2i}4^{n-i}C_{i},\label{niep}\\
C_{2n+2}  &  =2\sum_{i=0}^{n}\binom{2n+1}{2i}4^{n-i}C_{i} \label{parz}%
\end{align}

iii)%
\[
C_{n}=\frac{3}{2}\sum_{i\geq0}\frac{1}{4^{i}}\frac{(2n+2i)!}{(i+n)!(n+i+2)!},
\]

iv)%
\[
\frac{(n+1)!(n+2)!}{(2n+4)!}=\frac{1}{4^{n+2}}\sum_{j\geq0}\binom{2j}{j}%
\frac{1}{4^{j}(n+j+2)}.
\]
Or equivalently%
\[
\frac{4^{n+2}}{C_{n+2}}=\sum_{j\geq0}\binom{2j}{j}\frac{1}{4^{j}}%
\frac{(n+2)(n+3)}{(2n+2j+2)}.
\]

v)
\[
\frac{1}{4^{n}}\binom{2n}{n}=1-\frac{1}{2}\sum_{j=0}^{n-1}\frac{C_{j}}{4^{j}%
},
\]

vi)%
\[
\frac{n!n!}{(2n+1)!}=4\sum_{j\geq0}4^{j}\frac{(n+j)!(n+j+2)!}{(2n+2j+4)!}.
\]
Or equivalently%
\[
\frac{4^{n}}{C_{n}}=\sum_{j\geq0}\frac{4^{n+j+1}}{C_{n+j+1}}\frac
{(2n+2)(n+1)}{(n+j+1)(2n+2j+3)(2n+2j+4)}.
\]

\end{theorem}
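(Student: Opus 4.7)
The plan is to handle each of the six identities by exhibiting it as an instance of one of two techniques already developed in the paper: (a) expanding the ratio of two beta densities $g(x;0,\alpha,\beta)/g(x;0,\gamma,\delta)$ as a finite polynomial or convergent power series in $x$ and integrating term by term against $x^{n}$; (b) applying the finite change-of-center formula from Lemma \ref{binomial} i) with the specific choice $b=0$, $c=-2$. In each case the resulting moment identities are then turned into the stated identities by inserting the closed-form expressions for the moments from Lemma \ref{pomoc} and Remark \ref{Pocz}.

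For (i), I would note that $g(x;0,1/2,3/2)/g(x;0,1/2,1/2) = (4-x)/2$, using $B(1/2,1/2)=\pi$ and $B(1/2,3/2)=\pi/2$. Integration against $x^{n}$ turns this into $M_{n}(0,1/2,3/2)=2M_{n}(0,1/2,1/2)-\tfrac{1}{2}M_{n+1}(0,1/2,1/2)$, which is precisely (\ref{Cnabin}) after substituting the central binomial coefficients. For (iii), (v), (vi) the tool is the geometric-series expansion $g(x;0,\alpha,\beta)/g(x;0,\alpha,\beta+1) = \tfrac{\beta}{\alpha+\beta}\sum_{j\ge0}(x/4)^{j}$ used in the proof of Lemma \ref{binomial} iii) (from which the $4^{-j}$ in assertion (\ref{iiib}) is to be retained). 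Applied with $(\alpha,\beta)=(1/2,3/2)$ it gives $C_{n}=\tfrac{3}{4}\sum_{j\ge0}M_{n+j}(0,1/2,5/2)/4^{j}$, which becomes (iii) after inserting the value $M_{k}(0,1/2,5/2)=2(2k)!/(k!(k+2)!)$ from Remark \ref{Pocz} d). Applied with $(\alpha,\beta)=(1/2,1/2)$ it gives $\binom{2n}{n}/4^{n}=\tfrac{1}{2}\sum_{k\ge n}C_{k}/4^{k}$; subtracting this tail from the identity $\sum_{k\ge0}C_{k}/4^{k}=2$ of (\ref{2}) produces (v). Applied with $(\alpha,\beta)=(1,1/2)$ it gives $M_{n}(0,1,1/2)=\tfrac{1}{3}\sum_{j\ge0}M_{n+j}(0,1,3/2)/4^{j}$; inserting $M_{k}(0,1,1/2)=4^{2k}k!k!/(2k+1)!$ and $M_{k}(0,1,3/2)=12\cdot4^{2k}k!(k+2)!/(2k+4)!$ from Lemma \ref{pomoc} iv) yields (vi) after a short cancellation.

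For (ii) the plan is to invoke Lemma \ref{binomial} i) with $b=0$, $c=-2$, $(\alpha,\beta)=(3/2,3/2)$. Since the density on $[-2,2]$ is symmetric, $M_{j}(-2,3/2,3/2)=S_{j}(3/2,3/2)$, which by Proposition \ref{Cent} c) equals $C_{j/2}$ for even $j$ and $0$ for odd $j$. Combined with $M_{n}(0,3/2,3/2)=C_{n+1}$ from Remark \ref{Pocz} e), splitting the resulting sum $C_{n+1}=\sum_{j=0}^{n}\binom{n}{j}S_{j}(3/2,3/2)\,2^{n-j}$ according to whether $n$ is even or odd delivers (\ref{niep}) and (\ref{parz}) directly.

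Identity (iv) is the one member of the list that is not packaged as a ratio-of-densities expansion in the beta family already tabulated. My plan is to integrate the generating-function identity $(1-t)^{-1/2}=\sum_{j\ge0}\binom{2j}{j}t^{j}/4^{j}$ term by term against $t^{n+1}$ on $[0,1]$. The left-hand side produces $B(n+2,1/2)$, and then the evaluation $\Gamma(n+5/2)=(2n+4)!\sqrt{\pi}/(4^{n+2}(n+2)!)$ converts this into $4^{n+2}(n+1)!(n+2)!/(2n+4)!$; dividing by $4^{n+2}$ gives (iv). Across all six parts the only real obstacle is ensuring that the infinite series in (iii), (v), (vi) (and the interchange of sum and integral in (iv)) are legitimate: the necessary asymptotic estimates $|M_{n}(0,i+1/2,j+1/2)|=4^{n}O(1/n^{j+1/2})$ etc.\ are supplied by Proposition \ref{zbi} ii), and Lebesgue monotone convergence applies exactly as in the proof of Lemma \ref{binomial} iii).
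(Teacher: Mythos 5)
Your proposal is correct and follows essentially the same route as the paper: parts (i) and (iii) via the density-ratio expansions $(4-x)/2$ and $3/(4-x)$, part (ii) via Lemma \ref{binomial} i) with $b=0$, $c=-2$ combined with Proposition \ref{Cent} c) and a parity split, and parts (v), (vi) via the geometric expansion underlying (\ref{iiib}) together with (\ref{2}). The only substantive deviations are that for (iv) you integrate the binomial series for $(1-t)^{-1/2}$ directly as a Beta integral $B(n+2,1/2)$ rather than expanding the ratio $a(x;2,1/2)/a(x;3/2,1)$ (which is cleaner and, as a bonus, avoids the constant-factor slips present in the paper's own computation of that step), and that you correctly restore the factor $4^{-j}$ missing from the printed form of (\ref{iiib}) before using it.
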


\begin{proof}
i)We take $\alpha\allowbreak=\allowbreak1/2,$ $\beta\allowbreak
=3/2,\allowbreak\gamma\allowbreak=\allowbreak\delta\allowbreak=\allowbreak
1/2.$ Then we apply (\ref{exp}). ii) First we note that $M_{n}%
(0,3/2,3/2)\allowbreak=\allowbreak C_{n+1}$, by Remark \ref{Pocz}e), while
$S_{n}(3/2,3/2)\allowbreak=\allowbreak0$ for $n$ odd and $C_{n/2}$ when $n$ is
even. Then we apply (\ref{iMm} and take $k\allowbreak=\allowbreak2n$ getting
directly (\ref{niep}). We get (\ref{parz}) likewise. iv) We take
$\alpha\allowbreak=\allowbreak1/2$, $\beta\allowbreak=\allowbreak3/2,$
$\gamma\allowbreak=\allowbreak1/2,$ $\delta\allowbreak=\allowbreak5/2$. Then
$\frac{a(x;\alpha,\beta)}{a(x;\gamma,\delta)}\allowbreak=\allowbreak\frac
{3}{4-x}\allowbreak=\allowbreak\frac{3}{4}\sum_{k\geq0}\frac{x^{k}}{4^{k}}$.
Now recall that $M_{n}(0,1/2,3/2)\allowbreak=\allowbreak C_{n}$ and
$M_{n}(0,1/2,5/2)\allowbreak=\allowbreak\frac{2(2n)!}{n!(n+2)!}$ by Remark
\ref{Pocz} b) and d). Now it remains to apply (\ref{exp}). iv) We consider
$\alpha\allowbreak=\allowbreak2$, $\beta\allowbreak=\allowbreak1/2$ and
$\gamma\allowbreak=\allowbreak3/2$ and $\delta\allowbreak=\allowbreak1.$ Then
we apply (\ref{exp}) using expansion
\begin{align*}
\frac{g(x;0,2,1/2)}{g(x;0,3/2,1)}  &  =\frac{1}{2}\sqrt{\frac{x}{4-x}}%
=\frac{1}{4}\sum_{j\geq0}(-1)^{j}(-1/2)_{(j)}\frac{x^{j+1/2}}{4^{j}j!}\\
&  =\frac{1}{4}\sum_{j\geq0}\frac{(2j)!}{4^{j}j!}\frac{x^{j+1/2}}{4^{j}%
j!}=\frac{1}{4}\sum_{j\geq0}\binom{2j}{j}\frac{1}{4^{2j}}x^{j+1/2}.
\end{align*}
Now we notice that $\int_{0}^{4}x^{n+1/2}g(x;0,3/2,1)dx\allowbreak
=\allowbreak\frac{3}{2}\int_{0}^{4}x^{n}g(x;0,2,1)dx\allowbreak=\allowbreak
\frac{3}{2}4^{n}\frac{(2)^{(n)}}{(3)^{(n)}}\allowbreak=\allowbreak\frac{3}%
{2}4^{n}\frac{2(n+1)!}{(n+2)!}\allowbreak=\allowbreak\frac{3\times4^{n}}{n+2}%
$. Hence%
\begin{align*}
4^{2n}\frac{12(n+1)!(n+2)!}{(2n+4)!}  &  =\frac{1}{4}\sum_{j\geq0}\binom
{2j}{j}\frac{1}{4^{2j}}\int_{0}^{4}x^{j+n+1/2}g(x;0,3/2,1)dx\\
&  \frac{1}{4}\sum_{j\geq0}\binom{2j}{j}\frac{1}{4^{2j}}\frac{3\times4^{n+j}%
}{(n+j+2)}.
\end{align*}

v) and vi) we apply assertion iii)(iiib) of Lemma \ref{binomial} first for
$\alpha\allowbreak=\allowbreak3/2,$ $\beta\allowbreak=\allowbreak1/2$ and
$\gamma\allowbreak=\allowbreak3/2$ and $\delta\allowbreak=\allowbreak3/2$ and
secondly for $\alpha\allowbreak=\allowbreak1,$ $\beta\allowbreak
=\allowbreak1/2$ and $\gamma\allowbreak=\allowbreak1$ and $\delta
\allowbreak=\allowbreak3/2$ then we use identity (\ref{2}) in the case vi) and
divide both sides by $4^{2n}$ in the case of vii).
\end{proof}

\begin{remark}
Assertion i) is well-known. It appears for example as exercise 4 on page 155
of \cite{Grimaldi12}. It is also easy to get from the assertion i) the formula
given in assertion i) of Theorem 3.1 in the Stanley's book \cite{Stan15}.
Namely, we have
\begin{align*}
C_{n}  &  =2\binom{2n}{n}-\frac{1}{2}\binom{2(n+1)}{n+1}=\binom{2n}{n}%
+\binom{2n}{n}-\frac{1}{2}\binom{2(n+1)}{n+1}\\
&  =\binom{2n}{n}+\frac{(2n)!}{n!n!}-\frac{(2n+1)!}{n!(n+1)!}=\binom{2n}%
{n}+\binom{2n}{n}(1-\frac{2n+1}{n+1})\\
&  =\binom{2n}{n}-\binom{2n}{n-1}.
\end{align*}
Assertion ii) presents, in fact, two cases of the so-called Touchard's
identity for odd and even cases.
\end{remark}

\begin{theorem}
\label{symet}We have:

i) for all $n\geq0$
\begin{equation}
\sum_{i=0}^{\left\lfloor n/2\right\rfloor }\binom{n}{2i}2^{n-2i}\binom{2i}%
{i}=\binom{2n}{n}, \label{idbin}%
\end{equation}

ii) for all $n\geq1$
\[
\binom{2n}{n}=2\times4^{n-1}-2^{n-1}\sum_{j=1}^{\left\lfloor n/2\right\rfloor
}\binom{n}{2j}\sum_{s=0}^{j-1}\frac{C_{s}}{4^{s}}.
\]

iii)
\[
C_{n+1}=2\binom{2n}{n}-\frac{1}{2}\sum_{j=0}^{\left\lfloor n/2\right\rfloor
}\binom{n}{2j}2^{n-2j}\binom{2j+2}{j+1}.
\]

iv)
\[
C_{n}\allowbreak=\allowbreak\sum_{k=0}^{n}(-1)^{k}\binom{n}{k}\binom
{k}{\left\lfloor k/2\right\rfloor }2^{n-k}.
\]

v)
\[
\frac{1}{2^{n-1}}\binom{n}{\left\lfloor n/2\right\rfloor }=2-\sum
_{j=0}^{\left\lfloor (n-1)/2\right\rfloor }\frac{1}{4^{j}}C_{j}.
\]

vi)
\[
S_{n}(3/2,3/2)\allowbreak=\allowbreak2\binom{n}{\left\lfloor n/2\right\rfloor
}\allowbreak-\binom{n+1}{\left\lfloor (n+1)/2\right\rfloor }.
\]

vii) Let us define $C_{-1}\allowbreak=\allowbreak-1/2$ and $d_{0}%
\allowbreak=\allowbreak1$ and further let $n\geq0:$
\[
d_{n}\allowbreak=\allowbreak\frac{n!}{2\times4^{n-1}}\sum_{k=0}^{n}%
(-1)^{k-1}\binom{2k}{k}C_{n-k-1}.
\]
Then%
\[
S_{n}(1,2)=\frac{\pi}{4}\sum_{k\geq0}\frac{S_{n+k}(3/2,3/2)}{2^{k}k!}d_{k}.
\]
In particular $\forall j\geq0$
\begin{align*}
\frac{-2\times2^{2j}}{2j+3}  &  =\frac{\pi}{4}\sum_{k\geq0}\frac{C_{j+k+1}%
}{2^{2k+1}(2k+1)!}d_{2k+1},\\
\frac{4^{j}}{2j+1}  &  =\frac{\pi}{4}\sum_{k\geq0}\frac{C_{j+k}}{4^{k}%
(2k)!}d_{2k}.
\end{align*}

\end{theorem}

\begin{proof}
i) We use Proposition \ref{Cent}a) and (\ref{iMm}). ii) We take $\gamma
\allowbreak=\allowbreak\delta\allowbreak=\allowbreak3/2$ and $\alpha
\allowbreak=\allowbreak\beta\allowbreak=\allowbreak1/2$. To use identity
(\ref{exp3}) we have to calculate only the coefficients $c_{n}(\alpha
-\gamma,\beta-\delta)\allowbreak=\allowbreak c_{n}(-1,-1).$ One can easily
check that
\[
c_{n}(-1,-1)=\left\{
\begin{array}
[c]{ccc}%
0 & \text{if} & n\text{ is odd}\\
n! & \text{if} & n\text{ is even}%
\end{array}
\right.  .
\]
Then we argue:%
\begin{align*}
\binom{2n}{n}  &  =\frac{B(3/2,3/2)}{4^{2(1/2-3/2)}B(1/2,1/2)}\sum
_{j=0}^{\left\lfloor n/2\right\rfloor }\binom{n}{2j}2^{n-2j}\sum_{s=0}%
^{\infty}\frac{C_{s+j}}{4^{s}}\\
&  =\frac{1}{2}\sum_{j=0}^{\left\lfloor n/2\right\rfloor }\binom{n}%
{2j}2^{n-2j}4^{j}\sum_{s=0}^{\infty}\frac{C_{s+j}}{4^{s+j}}=2^{n-1}\sum
_{j=0}^{\left\lfloor n/2\right\rfloor }\binom{n}{2j}(2-\sum_{s=0}^{j-1}%
\frac{C_{s}}{4^{s}}).
\end{align*}
Further we make use of (\ref{2}) and the following identity:
\[
\sum_{j=0}^{\left\lfloor n/2\right\rfloor }\binom{n}{2j}=2^{n-1}.
\]
iii) We take $\gamma\allowbreak=\allowbreak\delta\allowbreak=\allowbreak1/2$
and $\alpha\allowbreak=\allowbreak\beta\allowbreak=\allowbreak3/2$. We will
use Lemma \ref{pomoc}iiB with $i\allowbreak=\allowbreak1,$ then (\ref{iMm})
obtaining
\begin{align*}
C_{n+1}\allowbreak &  =\allowbreak\sum_{j=0}^{n}\binom{n}{j}2^{n-j}%
S_{j}(3/2,3/2)\\
&  =\sum_{j=0}^{\left\lfloor n/2\right\rfloor }\binom{n}{2j}2^{n-2j}C_{j}.
\end{align*}
Then we apply (\ref{Cnabin}) and finally we use (\ref{idbin}).

iv) let us take $\gamma\allowbreak=\allowbreak\delta\allowbreak=\allowbreak
3/2$ then, using (\ref{exp3}) and (\ref{2}) and the following argument
\begin{gather*}
(-1)^{n}\binom{n}{\left\lfloor n/2\right\rfloor }=\int_{-2}^{2}x^{n}%
g(x;-2,1/2,3/2)dx\\
=\int_{-2}^{2}x^{n}g(x;-2,3/2,3/2)\frac{1}{2+x}dx=\frac{1}{2}2^{n}(-1)^{n}%
\sum_{k=0}^{\infty}(-1)^{k+n}2^{-k-n}S_{n+k}(3/2,3/2)\\
=(-1)^{n}2^{n-1}(\sum_{m=0}^{\infty}(-1)^{m}2^{-m}S_{m}(3/2,3/2)-\sum
_{m=0}^{n-1}(-1)^{m}\frac{1}{2^{m}}S_{m}(3/2,3/2).
\end{gather*}
Consequently we have%
\[
\frac{1}{2^{n-1}}\binom{n}{\left\lfloor n/2\right\rfloor }=2-\sum
_{j=0}^{\left\lfloor (n-1)/2\right\rfloor }\frac{1}{4^{j}}C_{j}.
\]

v) We take $\alpha\allowbreak=\allowbreak\beta\allowbreak=\allowbreak3/2$ and
$\gamma\allowbreak=\allowbreak3/2$ and $\delta\allowbreak=\allowbreak1/2.$

vi) We take $\allowbreak\delta\allowbreak=\allowbreak3/2$ and $\gamma
\allowbreak=\allowbreak3/2$ and $\alpha\allowbreak=\allowbreak1$ and
$\beta\allowbreak=2.$ then we have%
\begin{align*}
S_{n}(1,2)  &  =\left\{
\begin{array}
[c]{ccc}%
\frac{2^{n}}{n+1} & \text{if} & n\text{ is even}\\
-\frac{2^{n}}{n+2} & \text{if} & n\text{ is odd}%
\end{array}
\right. \\
&  =\frac{1}{8}\int_{-2}^{2}y^{n}(2-y)dy.
\end{align*}
We will be applying (\ref{exp3}), hence we have to calculate coefficients
$c_{k}(\alpha-\gamma,\beta-\delta)\allowbreak=\allowbreak c_{k}(-1/2,1/2).$
Now
\begin{align}
c_{s}(-1/2,1/2)\allowbreak &  =\allowbreak\sum_{k=0}^{s}\binom{s}{k}%
(-1)^{s-k}(-1/2)_{\left(  k\right)  }(1/2)_{\left(  s-k\right)  },\nonumber\\
&  =\sum_{k=0}^{s}\binom{s}{k}(-1)^{s-k}(-1)^{k}(1/2)^{\left(  k\right)
}(1/2)_{\left(  s-k\right)  }\nonumber\\
&  \sum_{k=0}^{s}\binom{s}{k}(-1)^{k}(1/2)^{\left(  k\right)  }(-1/2)^{\left(
s-k\right)  } \label{ckk}%
\end{align}
Now notice that if $k\allowbreak=\allowbreak0$ then $(1/2)^{k}\allowbreak
=\allowbreak(1/2)_{k}\allowbreak=\allowbreak1$ while for $k>0$ we get
\begin{align*}
(1/2)^{(k)}  &  =\prod_{m=0}^{k-1}(1/2+m)\allowbreak=\allowbreak\frac
{(2k)!}{4^{k}k!},\\
(-1/2)^{(k)}  &  =(-1)^{k}\prod_{m=0}^{k-1}(m-1/2)=-\frac{(2(k-1))!}%
{2\times4^{k-1}(k-1)!}.
\end{align*}
Hence
\begin{align*}
c_{n}(-1/2,1/2)  &  =(-1)^{n}\frac{(2n)!}{4^{n}n!}-\sum_{k=0}^{n-1}%
(-1)^{k}\frac{n!(2k)!(2(n-k-1))!}{k!(n-k)!4^{k}k!2\times4^{n-k-1}(n-k-1)!}\\
&  (-1)^{n}\frac{(2n)!}{4^{n}n!}-\frac{n!}{2\times4^{n-1}}\sum_{k=0}%
^{n-1}(-1)^{k}\binom{2k}{k}C_{n-k-1}.
\end{align*}
Notice, that if we define $C_{-1}$ as $-1/2$ then the formula above, becomes
more simple and has the more unified form:%
\[
c_{n}(-1/2,1/2)=\frac{n!}{2\times4^{n-1}}\sum_{k=0}^{n}(-1)^{k+1}\binom{2k}%
{k}C_{n-k-1}.
\]
Upon applying ((\ref{exp3}), we get:
\[
S_{n}(1,2)=\frac{\pi}{4}\sum_{k=0}^{\infty}\frac{S_{n+k}(3/2,3/2)}{2^{k}%
k!}c_{k}(-1/2,1/2).
\]
i.e., assertion vii).
\end{proof}

\begin{remark}
Notice, that assertion v) of Theorem \ref{symet} is a generalization of the
assertion v) of the Theorem \ref{c=0}.
\end{remark}

\begin{remark}
Let us notice, that the sequence $\left\{  d_{n}\right\}  _{n\geq0}$ defined
in assertion viii) of Theorem \ref{symet} is also a moment sequence since it
is originally defined by the formula (\ref{ckk}). This is so since $\left\{
\left(  -1/2\right)  ^{(n)}/n!\right\}  $ and $\left\{  \left(  1/2\right)
^{(n)}/n!\right\}  $ are both moment sequences by (\ref{mJ}), $\left\{
n!\right\}  $ is the is the moment sequence of the distribution with the
density $\exp(-x)$ on $\mathbb{R}^{+}$. \ Consequently, by Proposition
\ref{momenty} we deduce that $\left\{  d_{n}\right\}  $ is a moment sequence.
Further we have for $\left\vert x\right\vert <1$;%
\begin{gather*}
\sum_{n\geq0}\frac{x^{n}}{n!}d_{n}\allowbreak=\allowbreak\sum_{n\geq0}%
\sum_{k=0}^{n}\frac{x^{k}}{k!}(-1)^{k}(1/2)^{(k)}\frac{x^{n-k}}{(n-k)!}%
(-1/2)^{(n-k)}\allowbreak\\
=\allowbreak\sum_{k=0}^{\infty}\frac{x^{k}}{k!}(-1)^{k}(1/2)^{(k)}\sum_{n\geq
k}\frac{x^{n-k}}{(n-k)!}(-1/2)^{(n-k)}=\sqrt{\frac{1-x}{1+x}}.
\end{gather*}
But $\sqrt{\frac{1-x}{1+x}}\allowbreak=\allowbreak\exp(\tanh^{-1}(-x))$ for
$x$ real and $\left\vert x\right\vert <1$. Hence we can identify sequence
$\left\{  d_{n}\right\}  $ as $sc$ version of the sequence A000246 in OEIS.

It might be of interest to notice that the formula given in assertion viii) of
Theorem \ref{symet} expresses $d_{n}$ in terms of the Catalan numbers.
\end{remark}

\end{document}